\newcommand{\excise}[1]{}
\newcommand\<{\langle}
\newcommand\RR{\mathbb{R}}
\newcommand\EE{\mathbb{E}}
\newcommand\MM{\mathcal{M}}
\newcommand\XX{\mathfrak{X}}
\renewcommand\>{\rangle}
\newtheorem{theorem}{Theorem}
\newtheorem{definition}{Definition}
\newtheorem{lemma}{Lemma}
\newtheorem{example}{Example}
\newtheorem{proposition}{Proposition}
\DeclareMathOperator\var{var}
\DeclareMathOperator\dive{div}
\newcommand{\RNum}[1]{\uppercase\expandafter{\romannumeral #1\relax}}
\begin{document}
	
	\title{\mbox{}\\[-11ex]Random Lie Brackets that Induce Torsion: A Model for Noisy Vector Fields}
	\vspace{-8ex}
	\author{\\[1ex]Didong Li$^1$ and Sayan Mukherjee$^{1,2}$ \\ 
		{ Departments of Mathematics and Statistical Science}$^{1}$\\
		{ Departments of Computer Science, and Biostatistics \& Bioinformatics}$^2$ \\ Duke University}
	\date{\vspace{-5ex}}
	
	\maketitle
	
We define and study a random Lie bracket that induces torsion in expectation. Almost all stochastic analysis on manifolds have assumed parallel transport. 
Mathematically this assumption is very reasonable. However, in many applied geometry and graphics problems parallel transport is not achieved, the ``change in coordinates"
are not exact due to noise. We formulate a stochastic model on a manifold for which parallel transport does not hold and analyze the consequences of this model with respect to classic
quantities studied in Riemannian geometry. We first define a stochastic lie bracket that induces a stochastic covariant derivative. We then study the connection implied by the stochastic covariant derivative and note that the stochastic lie bracket induces torsion. We then state the induced stochastic geodesic equations and a stochastic differential equation for parallel transport. We also derive the curvature tensors for our construction and a stochastic Laplace-Beltrami operator. We close with a discussion of the motivation and relevance of our construction.

\section{Introduction}
	
Stochastic processes on manifolds have been an object of interest to probabilists, harmonic analysts, statisticians, and machine learners. The basic idea across all these disciplines has been to define or characterize random processes on Riemannian manifolds. There are basically two approaches to construct or model random processes on manifolds: one can randomize paths on the manifold or randomize the geometry that the paths follow. There is extensive literature on randomizing paths on manifolds. The problem of studying paths on on a randomized geometry is less developed and is more aligned with the model we study in this paper. A byproduct of modeling stochastic processes on manifolds by randomizing paths is that these stochastic process models assume that the torsion tensor is zero, that is the Lie bracket (which encodes the geometry) is equal to the covariant derivative. The assumption that the torsion tensor is zero gives rise to the phenomena of parallel transport which allows one to connect the geometries of nearby points on the manifold. In this paper we explore stochastic models on manifolds where the torsion tensor is not zero. We introduce torsion by defining random vector fields or random diffeomorphisms which induces a random Lie bracket. Given the random Lie bracket we derive stochastic analogs of classical Riemannian structures including: torsion, parallel transport, geodesics, curvature, and the Laplace-Beltrami operator.


There is extensive literature in probability, harmonic analysis, and statistics on random processes on manifolds. Probabilists have studied Brownian motion on Riemannian manifolds \citep{stroock2000introduction,hsu2008brief}. Building on stochastic processes on manifolds, stochastic differential equations (SDEs) on manifolds are well understood \citep{ito1950stochastic,ito1953stochastic,elworthy1982stochastic,li1994stochastic,EmeryMeyer1989}. SDEs on manifolds have been applied to multiple fields, including non-linear filtering \citep{rugunanan2005stochastic} and signal processing \citep{manton2013primer}.  There is also extensive literature on Wiener measures and path integrals on Riemannian manifolds, including the the Feynman-Kac formula on Riemannian manifolds \citep{LLL2018}. A common approach in the study of Brownian motion on manifolds is to extend stochastic analysis on Euclidean space to manifolds by using the frame bundle to transfer Brownian motion in $\RR^d$ to manifolds via the so-called Eells-Elworthy-Malliavin \citep{EEM} construction. The random process we study diverges from the classical perspective of Brownian motion on Riemannian manifolds as we consider a stochastic process that will induce torsion and parallel transport does not hold. 

In harmonic analysis there is extensive work on diffusions on manifolds ranging from the theory of diffusions and semi-groups on manifolds \citep{grigoryanheat} to methodology for data analysis based on manifold assumptions including diffusion maps for dimension reduction \cite{coifman2006diffusion,CoifmanLafonLMNWZ2005PNAS1}, diffusions on non-orientable manifolds \cite{singer2011orientability}, vector diffusion maps \citep{singer2012vector}, and diffusion geometries of fiber bundles \citep{gao2019diffusion}. In the statistics and machine learning literature there has been extensive work under the rubric of manifold learning using random process on manifolds for unsupervised dimension reduction \cite{donoho03,isomap,lle,LapEigMaps2003}, supervised methods for dimension reduction \citep{mukherjee2010}, as well as inference based on Gaussian processes embedded in a manifold Gaussian process \citep{dunson2019diffusion}, and
and stochastic gradient descent \citep{feng2019uniform}. Almost all the theory as well as data analysis methods have assumed that the torsion tensor is zero and there is a connection
that allows for the flow of geometries between nearby points on the manifold. From the perspective of harmonic analysis and inference this paper explores a setting where coordinate 
changes between two points on the manifold are noisy.

Our motivation for introducing torsion to stochastic processes on manifolds arises from a data analysis application in geometric morphometrics. The objective of geometric morphometrics is to quantifying biological shape, shape variation, and covariation of shape with other biotic or abiotic variables or factors often with an eye to study evolutionary processes. Often these shapes are stored in large a database of 3-dimensional scans of surfaces such as bones or teeth \citep{Boyer:2016aa}. A classic mathematical tool to compare shapes uses diffeomorphism-based representations \citep{Dupuis:1998aa} and shapes are compared by examining the cost to continuously deform one shape into another \citep{3bd57925679c4109a40f2bbd5ae6160b,Boyer:2011aa}. These diffeomorphism-based approaches can be characterized as diffusions of fiber bundles \citep{gao2019diffusion},
hence a form of diffusion on manifolds. It is a fact that for real data the deformations do not map one shape exactly to another shape, there are errors in the correspondence map, see Figure \ref{errors} where the data are 3-dimensional scans of lemur teeth. This error in maps can naturally be thought of as the lack of parallel transport and was a strong motivation to provide a stochastic model on manifolds that has torsion.

 \begin{figure}[hbt]
\begin{center}
\includegraphics[height=2.3in]{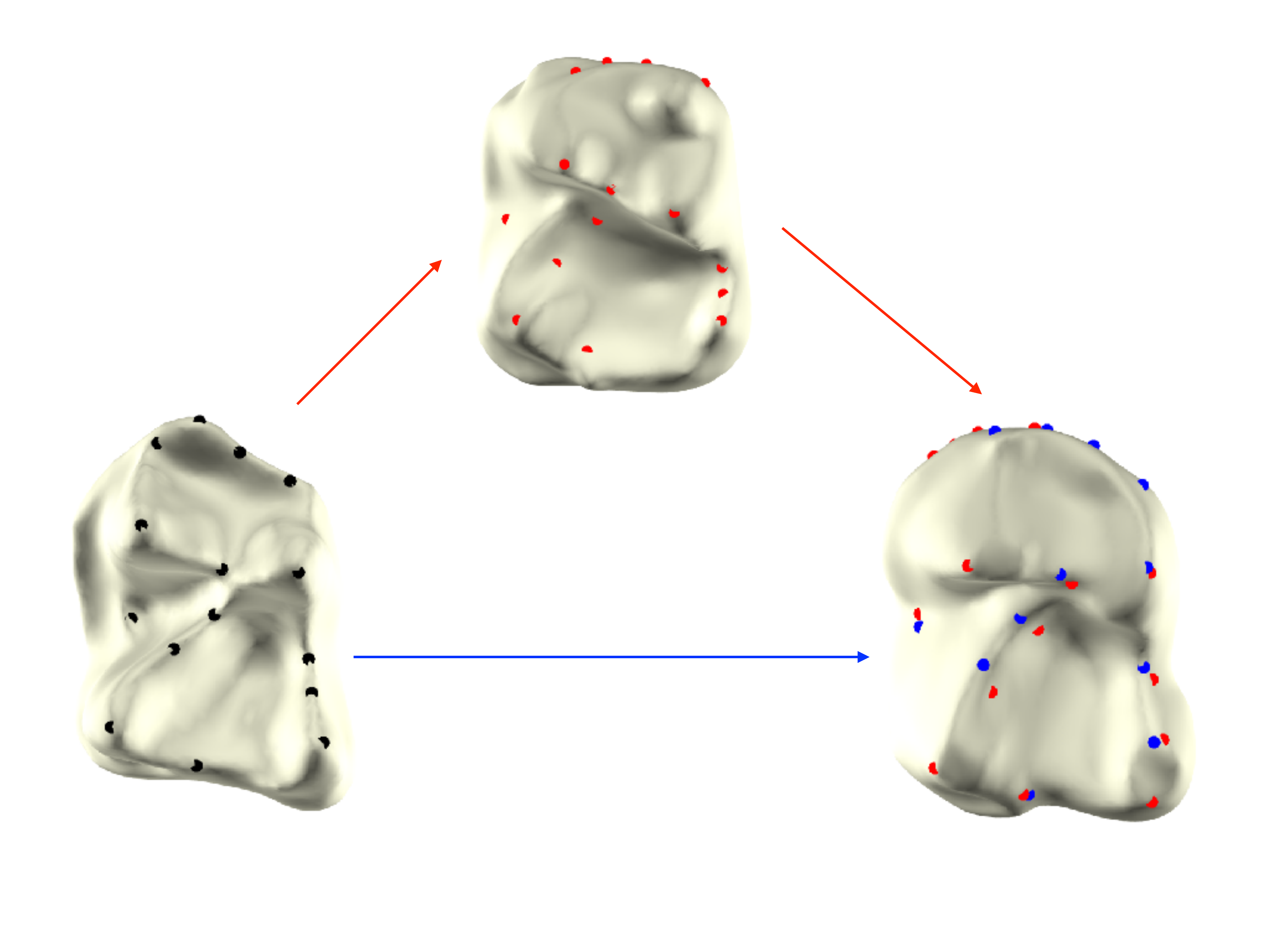}\\
\caption{\label{errors}
The black points on the far left shape are mapped to the blue points on the shape on the right. If the black points are mapped via the upper shape to the shape on the right we 
observe that they have been mapped to the red points. This example illustrates that these maps incur errors one propagates the analysis along shapes.
}
\end{center}
\end{figure}

There are two natural approaches to generalize random processes on manifolds by either randomizing sections on the manifold or making the manifold itself random. Both approaches can be thought of as particular extensions or departures form the classiic Malliavin calculus on a manifold. The basic elements required for Malliavin calculus on a manifold
are a basis manifold $V$, a fiber space $F$ on $V$, and a distribution over sections on $F$. One can generalize Malliavin calculus on a manifold by randomizing the manifold, an approach taken in \cite{khelif2013stochastic} with the introduction of $D^\infty$-stochastic manifolds. A  $D^\infty$-stochastic manifold is constructed from a family of stochastic charts
that satisfy a compatibility condition and the metric tensor, Levi-Civita connection, derivations, and curvature are derived in \cite{khelif2013stochastic}. The  $D^\infty$-stochastic manifold
is a very general construction and can have either non-zero or null torsion. The authors however focus on the setting with the analog of the classical the Levi-Civita connection and the torsion is zero. The stochastic construction in this paper is less abstract and the focus is more on introducing torsion and we make can explicitly state how the curvature tensor, parallel transport, and  Levi-Civita connection changes. In  \cite{nicolaescu2016stochastic} Gaussian ensembles on smooth sections are defined to prove a stochastic version of the Gauss-Bonnet-Chern theorem, which relates the curvature of a surface to its topology. In \cite{nicolaescu2016stochastic} it was shown that the expectation of a random current  is equal to the current defined by the Euler form. The definition of a stochastic section in  \cite{nicolaescu2016stochastic} is abstract, depending on a pullback bundle and dual bundle. Unlike our construction an explicit construction of stochastic sections in  \cite{nicolaescu2016stochastic} would be very complex. We will also show that our construction does not admit a
Gauss-Bonnet-Chern theorem so the local addition of torsion induces an obstruction to topology.


\section{Riemannian structures with random vector fields} \label{randomfields}
	
\subsection{A stochastic connection}

Consider a $m$-dimensional smooth Riemannian manifold $\MM$ equipped with a metric $g$.  Denote the space of all smooth vector fields on $\MM$ by $\XX(\MM)$, and denote $g(X,Y)$ by $\<X,Y\>$. In differential geometry a fundamental object is the connection. The connection 
formalizes the procedure of transporting data along a manifold in a consistent manner. In Riemannian geometry the canonical connection is the Levi-Civita connection which is an
affine connection typically denoted as $\nabla$ for which
\begin{eqnarray*}
\nabla g = 0,  &\quad  \ \ \mbox{the metric is preserved} \\
\nabla_X Y - \nabla_Y X = [X,Y], &\mbox{there is no torsion},
\end{eqnarray*}
in the above $X,Y$ are vector fields, $[\cdot, \cdot]$ is the Lie bracket of the fields,  and $g$ is the metric tensor. The connection encodes the geometry of the manifold and provides
a means to parallel transport tangent vectors from one point to another along a curve. The second equation above states that the connection can be stated in the form of a covariant derivative. The covariant derivative provides a calculus for taking directional derivatives of vector fields, measuring the deviation of a vector field from being parallel in a given direction.
A key idea in classical Riemannian geometry is that the geometry encoded by the Lie bracket provides the same information as the covariant derivative.

In this section we will define a stochastic connection that will share many of the nice properties of the Levi-Civita connection but will allow for the addition of torsion and state the
relation between the stochastic connection and classic deterministic one.

The key idea we will use to define a stochastic connection is a randomization of vector fields, as defined below.
\begin{definition}[A random vector field]\label{def:randvec}
		For $X\in\XX(\MM)$, a randomization of $X$, denoted by $\widetilde{X}$,  is a random vector field satisfying for any $p\in \MM$
	
	\begin{enumerate}[(a)]
		\item $\widetilde{X}(p)$ is a random vector in $T_p\MM$;
		\item $\EE[\widetilde{X}(p)]=X(p)$.
		 \end{enumerate}
	\end{definition}
There are two important properties of this randomization. First the fiber bundle structure of the field is preserved, the randomization is a random map that takes a tangent vector
to a (random) tangent vector in the same tangent space. The second condition states that this map is unbiased. In keeping with classical constructions in differential geometry, the smoothness of a vector field is necessary.  In the stochastic setting, we will require the  randomized vector field to be almost surely smooth. In this section we will provide 
examples of randomizations that satisfy the previous conditions, we will also see that there is not a unique randomization that satisfies the above properties so a rich class of 
random fields can be considered.

We will use the random vector field $\widetilde{X}$ to define a random connection. The first step of this process is to define a random differentiation  $\widetilde{D}$.
\begin{definition}[Random differentiation]\label{rand:def} 
	For any $X,Y\in\XX(\MM)$, the stochastic covariant derivative of $Y$ with respect to $X$ is defined as
	$$\widetilde D_XY\coloneqq D_{\widetilde X}\widetilde Y.$$
\end{definition}
We will use this random differentiation to generate a random connection rather than generating a random connection directly. At the end of this section we will provide an example that
directly randomizes the connection or bracket directly. 

We now check that the connection based on our random differentiation shares the same properties as an affine connection.
\begin{lemma}[Random connection] \label{lem:connection} Consider  vector fields $X, Y \in \XX(\MM)$ such that for all $f  \in C^\infty(\MM)$
	\begin{equation}\label{eqn:randfunc}
	\widetilde{fX+Y}=f\widetilde X+\widetilde Y.
	\end{equation}
Use the random differentiation $\widetilde{D}$ in Definition \ref{rand:def} to define a connection. Then for any smooth function $f\in C^\infty(\MM)$, vector fields $X,Y,Z\in \XX(\MM)$, and  scalar $a\in\RR$:
	\begin{enumerate}
		\item $\widetilde D _{fX+Y}Z = f\widetilde D_X Z+\widetilde D_YZ$;
		\item  $\widetilde D _{X}(aY+Z) =a\widetilde D_X Y+\widetilde D_XZ$;
		\item $\widetilde D_{X}fY= \widetilde X(f)\widetilde Y+f\widetilde D _{X}Y$. 
	\end{enumerate}
	
	\end{lemma}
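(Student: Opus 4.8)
The plan is to reduce each of the three identities to the corresponding defining axiom of the underlying deterministic affine connection $D$ (the Levi-Civita connection), exploiting only the definition $\widetilde D_X Y = D_{\widetilde X}\widetilde Y$ from Definition \ref{rand:def} together with the $C^\infty(\MM)$-linearity hypothesis (\ref{eqn:randfunc}) of the randomization. Throughout, the identities are to be read almost surely: by the standing requirement that $\widetilde X$ be a.s.\ a smooth vector field, each realization $\omega$ yields smooth fields $\widetilde X(\omega),\widetilde Y(\omega)$ to which the ordinary connection $D$ applies pointwise, and $D_{\widetilde X}\widetilde Y$ is the random field $\omega\mapsto D_{\widetilde X(\omega)}\widetilde Y(\omega)$. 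All three claims are then pathwise algebraic consequences of the axioms of $D$; in particular the unbiasedness condition (b) of Definition \ref{def:randvec} plays no role here, only the linearity (\ref{eqn:randfunc}).

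For the first identity I would expand $\widetilde D_{fX+Y}Z = D_{\widetilde{fX+Y}}\widetilde Z$, rewrite $\widetilde{fX+Y}=f\widetilde X+\widetilde Y$ using (\ref{eqn:randfunc}), and invoke the $C^\infty$-linearity of $D$ in its directional slot, $D_{f\widetilde X+\widetilde Y}\widetilde Z = f D_{\widetilde X}\widetilde Z + D_{\widetilde Y}\widetilde Z$, which is exactly $f\widetilde D_X Z+\widetilde D_Y Z$. The second identity is the same manoeuvre on the opposite slot: apply (\ref{eqn:randfunc}) with the constant function $f\equiv a$ to obtain $\widetilde{aY+Z}=a\widetilde Y+\widetilde Z$, then use the additivity and $\RR$-linearity of $D$ in its second argument, $D_{\widetilde X}(a\widetilde Y+\widetilde Z)=aD_{\widetilde X}\widetilde Y+D_{\widetilde X}\widetilde Z$, giving $a\widetilde D_X Y+\widetilde D_X Z$.

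The third identity is the only one with genuine content. First I would record that (\ref{eqn:randfunc}) forces $\widetilde 0 = 0$ a.s.\ (take $f\equiv 1$ and both fields equal to the zero field, so that $\widetilde 0 = \widetilde 0+\widetilde 0$), whence putting the additive field to zero in (\ref{eqn:randfunc}) yields $\widetilde{fY}=f\widetilde Y$. Then $\widetilde D_X(fY)=D_{\widetilde X}(f\widetilde Y)$, and applying the Leibniz rule of $D$ realization-by-realization gives $D_{\widetilde X}(f\widetilde Y)=\widetilde X(f)\,\widetilde Y+f D_{\widetilde X}\widetilde Y=\widetilde X(f)\,\widetilde Y+f\widetilde D_X Y$, where $\widetilde X(f)$ is understood as the random function $p\mapsto df_p(\widetilde X(p))$. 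The main thing to get right here is not a deep obstacle but the measure-theoretic bookkeeping: one must verify that $D_{\widetilde X}\widetilde Y$ and $\widetilde X(f)$ are bona fide random objects (jointly measurable in $(\omega,p)$ and smooth in $p$ for a.e.\ $\omega$), so that the pathwise identities assemble into identities of random vector fields, with the same realization of $\widetilde X$ entering the derivative term $\widetilde X(f)$.
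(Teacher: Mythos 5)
Your proof is correct and follows essentially the same route as the paper's: expand each $\widetilde D$ via Definition \ref{rand:def}, rewrite the randomized combination using the linearity hypothesis \eqref{eqn:randfunc}, and then invoke the corresponding axiom (function-linearity, $\RR$-linearity, Leibniz rule) of the underlying connection $D$ pathwise. Your extra steps --- deriving $\widetilde{fY}=f\widetilde Y$ explicitly from \eqref{eqn:randfunc} and flagging the a.s.\ smoothness and measurability bookkeeping --- are refinements the paper leaves implicit, not a different argument.
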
	
\noindent The first two conditions simply state the linearity of the random connection $\widetilde{D}$. The third condition is the analog of the Leibniz rule for differentiation of vector fields. 

\begin{example}
Let $\varepsilon$ be a random smooth function on $\MM$ and set $\widetilde{X}=\varepsilon X$. Two conditions stated in Definition \ref{def:randvec} hold:
$\widetilde{X}(p) \in T_p \MM$ and $\EE[\widetilde{X}(p)] = \EE[\varepsilon(p) {X}(p)] =X(p)$. It also holds that for vector fields $X, Y \in \XX(\MM)$  for all $f  \in C^\infty(\MM)$
$$\widetilde{fX+Y}=f\widetilde X+\widetilde Y.$$
We could further require that $\EE[\widetilde D_XY]=D_XY,\:\forall X,Y\in\mathfrak{X}(M)$, that is the connection itself behave like the Levi-Civita connection in expectation which
implies that
\begin{equation*}
\EE[\varepsilon^2(p)]=1,\:\forall p\in \MM.
\end{equation*} 
Requiring both	 $\EE[\widetilde{X}(p)]$ and  $\EE[\varepsilon^2(p)]=1$ results in $\var(\varepsilon(p))=0$, so there is no randomization. So the  random connection cannot in expectation
give back the Levi-Civita connection.
\end{example}
A natural question is assuming  $D$ is the Levi-Civita connection with respect to Riemannian metric $g$ what is the relation between $D$ and $\widetilde{D}$ and what is the relation
between the lie bracket corresponding to $D$ and the covariant derivative corresponding to $\widetilde{D}$.  This analysis will show that the random connection will induce
torsion. We start by computing the covariant derivative $\widetilde D_XY-\widetilde D_YX$ in terms of a random smooth function $\varepsilon$, our notion of noise:
\begin{align*}
		\widetilde D_XY-\widetilde D_YX&=D_{\varepsilon X}\varepsilon Y-D_{\varepsilon Y}\varepsilon X=\varepsilon\left(D_X\varepsilon Y-D_Y\varepsilon X\right)\\
		& = \varepsilon\left[\varepsilon D_X Y+X(\varepsilon)Y-\varepsilon D_YX-Y(\varepsilon)X\right]\\
		&=\varepsilon^2(D_XY-D_YX)+\varepsilon X(\varepsilon)Y-\varepsilon Y(\varepsilon)X\\
		& = \varepsilon^2[X,Y]+\varepsilon X(\varepsilon)Y-\varepsilon Y(\varepsilon)X\\
		&\neq [X,Y].
		\end{align*}
\noindent So the random covariant derivative induces torsion with respect to the Lie bracket as defined by the standard affine connection and
$$[\widetilde{X},\widetilde{Y}]=[\varepsilon X,\varepsilon Y]=\varepsilon^2 [X,Y]+\varepsilon X(\varepsilon)Y-\varepsilon Y(\varepsilon)X.$$ 
The above equation allows us to define a stochastic notion of torsion.
		\begin{definition}[Stochastic torsion]\label{def:torsion} 
			The stochastic torsion with respect to $\widetilde{D}$ is
					$$\widetilde{T}(X,Y)\coloneqq \widetilde D_XY-\widetilde D_YX-[\widetilde X,\widetilde Y].$$
From the above calculatation we conclude that $\widetilde T =\varepsilon^2 T$. In the following statements we assume the noise $\varepsilon$ has non-zero variance.
Given our construction of  $\widetilde{D}$ the following torsion terms are zero 
\begin{eqnarray*}
0 &=&  \widetilde D_XY-\widetilde D_YX-[\widetilde X,\widetilde Y]	\\	
0 &= & D_X Y- D_YX-[X, Y],
\end{eqnarray*}
while the following torsion term will not be zero
\begin{equation*}
0 \neq \widetilde D_X Y-\widetilde D_YX- [ X, Y]=(\varepsilon^2-1)[X,Y]+\varepsilon X(\varepsilon)Y-\varepsilon Y(\varepsilon)X.
\end{equation*}

		\end{definition}

We would like our connection $\widetilde{D}$ to share most of the properties of the Levi-Civita connection, the exception being that the stochastic covariant derivative induced
by our stochastic connection has torsion with respect to the standard deterministic Lie bracket.

Another property of the Levi-Civita connection is compatibility with the Riemannian metric: 
$$\<D_XY,{Z}\>+\<{Y}, D_XZ\>={X}\left(\<{Y},{Z}\>\right).$$

A natural question to ask is whether
$$\<\widetilde{D}_XY,Z\>+\<Y,\widetilde{D}_XZ\>  =  X(\<Y,Z\>),$$
which is not likely to be true because the right hand side is deterministic while the left hand side is random:
\begin{align*}
\<\widetilde{D}_XY,Z\>+\<Y,\widetilde{D}_XZ\>& = \<\varepsilon^2D_XY+\varepsilon X(\varepsilon)Y,Z\>+\<Y,\varepsilon^2D_XZ+\varepsilon X(\varepsilon)Z\>\\
& = \varepsilon^2\left(\<D_XY,Z\>+\<Y,D_XZ\>\right)+2\varepsilon X(\varepsilon)\<Y,Z\>\\
& = \varepsilon^2X(\<Y,Z\>)+2\varepsilon X(\varepsilon)\<Y,Z\>\\
&\neq X(\<Y,Z\>).
\end{align*}
Observe that 		
$X\left(\<\widetilde{Y},\widetilde{Z}\>\right)=X\left(\varepsilon^2\<Y,Z\>\right)=\varepsilon^2X\left(\<Y,Z\>\right)+2\varepsilon X(\varepsilon)\<Y,Z\>$, so we conclude that 
$$\<\widetilde{D}_XY,Z\>+\<Y,\widetilde{D}_XZ\>=X\left(\<\widetilde{Y},\widetilde{Z}\>\right) \neq  X(\<Y,Z\>),$$		
and the randomization does not recover the classical quantity $X(\<Y,Z\>)$.

The following theorem summarizes the properties that our stochastic connection satisfies that have natural analogs in the  Levi-Civita case, the exception is in our construction all
these quantities are random.
		\begin{theorem}\label{St:Levi-Civita}
			Assume $\widetilde X=\varepsilon X$ where $\varepsilon$ is a almost surely randm smooth function on $\MM$,  define $\widetilde{D}_XY := D_{\widetilde{X}}\widetilde{Y}$, then for any $a\in\RR$, $f\in C^\infty(\MM)$ and $X,Y,Z\in\mathfrak{X}(\MM)$, we have
			\begin{enumerate}
				\item $\widetilde D_{fX+Y}Z=f\widetilde D_XZ+\widetilde D_YZ$.
				\item $\widetilde D_X(aY+Z)=a\widetilde D_XY+\widetilde D_XZ$.
				\item $\widetilde D_X(fY)=f\widetilde D_XY+\widetilde X(f)\widetilde Y$.
				\item $\widetilde D_XY-\widetilde D_YX-[\widetilde{X},\widetilde{Y}]=0$.
				\item $\<\widetilde D_XY,\widetilde{Z}\>+\<\widetilde{Y},\widetilde D_XZ\>=\widetilde{X}\left(\<\widetilde{Y},\widetilde{Z}\>\right)$.
			\end{enumerate}
		\end{theorem}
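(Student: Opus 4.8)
The plan is to derive every item from a single explicit formula for the stochastic covariant derivative and then transport the corresponding property of the Levi-Civita connection $D$ through the randomization. The key step is to expand the definition $\widetilde D_X Y = D_{\widetilde X}\widetilde Y = D_{\varepsilon X}(\varepsilon Y)$ using the $C^\infty$-linearity of $D$ in its lower slot together with the Leibniz rule for $D$, which gives
$$\widetilde D_X Y = \varepsilon^2 D_X Y + \varepsilon\, X(\varepsilon)\, Y.$$
With this identity in hand, items (1)--(3) are immediate: they are precisely the conclusions of Lemma \ref{lem:connection}, since $\widetilde X = \varepsilon X$ satisfies the hypothesis \eqref{eqn:randfunc} (verified in the Example). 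Concretely, I would push the coefficients $f$, $a$, and the factor in $fY$ through $\widetilde{\cdot}$ and then invoke, respectively, the $C^\infty$-linearity of $D$ in the first slot, the $\RR$-linearity of $D$ in the second slot, and the Leibniz rule of $D$.

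For item (4) I would reuse the torsion computation already carried out before Definition \ref{def:torsion}. The expansion yields $\widetilde D_X Y - \widetilde D_Y X = \varepsilon^2[X,Y] + \varepsilon X(\varepsilon)Y - \varepsilon Y(\varepsilon)X$, while the standard expansion of the bracket $[\varepsilon X,\varepsilon Y] = \varepsilon^2[X,Y] + \varepsilon X(\varepsilon)Y - \varepsilon Y(\varepsilon)X$ produces the identical expression. Subtracting the two gives $0$, which is exactly the vanishing of the stochastic torsion with respect to the stochastic bracket.

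The only item requiring genuinely new work is the metric compatibility (5), and this is where the care lies. Using the explicit formula above together with $\widetilde Z = \varepsilon Z$, I would expand
$$\<\widetilde D_X Y,\widetilde Z\> = \varepsilon^3\<D_X Y,Z\> + \varepsilon^2 X(\varepsilon)\<Y,Z\>,$$
and symmetrically for $\<\widetilde Y,\widetilde D_X Z\>$; summing and collapsing $\<D_X Y,Z\> + \<Y,D_X Z\> = X(\<Y,Z\>)$ by the metric compatibility of $D$ leaves $\varepsilon^3 X(\<Y,Z\>) + 2\varepsilon^2 X(\varepsilon)\<Y,Z\>$. On the right-hand side, $\widetilde X$ acts as the derivation $\varepsilon X(\cdot)$, so expanding $\widetilde X(\<\widetilde Y,\widetilde Z\>) = \varepsilon\, X(\varepsilon^2\<Y,Z\>)$ by the ordinary product rule produces exactly these same two terms.

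The main obstacle is modest and essentially organizational: the identity closes up only when all three vector fields and the derivation are randomized uniformly. Decorating only the two fields inside the inner products---as in the deterministic comparison displayed just before the theorem---leaves a leftover factor and the equality fails. The one conceptual point is therefore to recognize that both sides must carry the full $\varepsilon$-weighting; once that is fixed, the verification is bookkeeping in powers of $\varepsilon$, with the metric compatibility and Leibniz rule of $D$ doing the real work.
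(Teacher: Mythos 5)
Your proposal is correct and follows essentially the same route as the paper: items (1)--(3) are obtained by pushing $f$, $a$, and $fY$ through the randomization via \eqref{eqn:randfunc} and invoking the linearity and Leibniz properties of $D$, and item (4) is exactly the bracket-versus-covariant-derivative computation carried out before Definition \ref{def:torsion}. Your handling of item (5) is in fact slightly more careful than the paper's own proof, which asserts (5) was ``already shown'' even though the in-text computation establishes the mixed variant $\<\widetilde D_XY,Z\>+\<Y,\widetilde D_XZ\>=X\left(\<\widetilde Y,\widetilde Z\>\right)$ rather than the fully randomized identity in the theorem statement; the two differ by an overall factor of $\varepsilon$, and your explicit expansion (both sides equal $\varepsilon^3 X(\<Y,Z\>)+2\varepsilon^2 X(\varepsilon)\<Y,Z\>$) verifies the theorem's version directly.
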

\begin{proof}		
We need to prove the first three equations hold as we have all ready shown that equations (4) and (5) hold. Showing that equations (1)-(3) hold results from
applying equation \eqref{eqn:randfunc} to each of the first three equations above:
\begin{enumerate}
\item 	$
\widetilde D _{fX+Y}Z = D_{\widetilde{fX+Y}}\widetilde Z=D_{f\widetilde{X}+\widetilde{Y}}\widetilde{Z}=fD_{\widetilde X} \widetilde Z+D_{\widetilde Y}\widetilde Z=f\widetilde D_X Z+\widetilde D_YZ $.
\item $\widetilde D _{X}(aY+Z) = D_{\widetilde X}\widetilde {aY+Z}=D_{\widetilde X}(a\widetilde{Y}+\widetilde{Z})= aD_{\widetilde X} \widetilde Y+D_{\widetilde X}\widetilde Z=a\widetilde D_X Y+\widetilde D_XZ$.
\item $\widetilde D_{X}fY=D_{\widetilde X}\widetilde{fY}=D_{\widetilde X}f\widetilde Y=\widetilde X(f)\widetilde Y+fD_{\widetilde X}\widetilde Y= \widetilde X(f)\widetilde Y+f\widetilde D _{X}Y$.
	\end{enumerate}	
	\end{proof}	
		
Equations (1)-(3) imply that $\widetilde {D}$ is a connection, (4) states that $\widetilde {D}$ is stochastic torsion free while (5) shows that $\widetilde {D}$ is compatible with the stochastic Riemannian metric $\widetilde g(X,Y) = \<\widetilde X,\widetilde Y\>$, formally represented by $\widetilde{D}g=0$ .

To help fix the idea of a random vector field we provide a natural construction of the random function or field $\varepsilon$. This is obviously not the only construction and it would be interesting to further understand in greater detail how the geometric quantities we consider in this paper vary with the specification of the stochastic vector field. The following example
is possibly the most obvious construction.

\begin{example}[A random vector field]\label{eg:epsilon}

Assume $(\MM,g)$ is a compact Riemannian manifold and $\Delta$ is the Laplace-Beltrami operator with eigenvalues $0=\lambda_0\leq\lambda_1\leq\cdots$. We will use 
as bases the orthonormal eigenfunctions $\{\psi_i\}_{i=1}^\infty$ of $L^2(\MM,dV_g)$, that is
$$\Delta \psi_i = \lambda \psi_i,\;\<\psi_i,\psi_j\>_{L^2}=\delta_{ij},\;\forall i,j.$$
Let $X_1,X_2,\cdots$ be i.i.d Gaussian random variables $X_i\sim N(0,\sigma_k^2)$. The random functions we consider are defined as 
$$\varepsilon(x)= \sum_{i=1}^\infty X_i\psi_i(x)+1, \quad \forall x\in \MM.$$
If $\lim_{i\rightarrow \infty} i^\alpha\sigma_i^2=0$, then $f\in C^\alpha(\MM)$ almost surely. In particular if $\sigma_i^2 = \frac{1}{i^{\alpha}}$ then $\varepsilon$ is almost surely $C^2$ for any $\alpha>2$. 

For this construction it holds that $\EE\left[\varepsilon(x)\right]=1$, again a property that seems natural.

There are very general constructions of  Gaussian random functions on a manifold via Gaussian measures on spaces of distributions \citep{nicolaescu2016stochastic,gelfandgeneralized, bogachev1998gaussian,hsu1997stochastic}. Our goal in this paper is not generality. We would like construction for which we can interpret and compute the random geometric objects we consider, such as geodesic equations. 		
\end{example}




\subsection{Geodesics and parallel transport}

We will derive the geodesic equations and parallel transport induced by the stochastic connection $\widetilde{D}$. First we will define the Christoffel symbol with respect to $\widetilde{D}$. We will then compute the geodesic and parallel transport equations in expectation, both of these are deterministic differential equations. We will then consider the  stochastic geodesic and parallel transport equations both of which will be stochastic differential equations. 

Let $\gamma:(-T,T)\rightarrow \MM$ be a smooth curve and denote $\gamma(0)=p$. For $p\in U$ there is a local chart $[x^1,\cdots,x^m]$ and $\gamma(t)=[\gamma^1(t),\cdots,\gamma^m(t)]$. Denote $\widetilde{\Gamma}$ as the Christoffel symbol with respect to $\widetilde{D}$. From the definition of $\widetilde{D}$
		$$\widetilde{D}_{\frac{\partial}{\partial x^i}}{\frac{\partial}{\partial x^j}}=\widetilde{\Gamma}_{ij}^k{\frac{\partial}{\partial x^k}}.$$
		
The relation between the stochastic and standard Christoffel symbols is
$$\widetilde{\Gamma}_{ij}^k=\varepsilon^2\Gamma_{ij}^k+\varepsilon\frac{\partial \varepsilon}{\partial x^i}\delta_{jk},$$
as the following calculations confirm
		\begin{align*}
		\widetilde{D}_{\frac{\partial}{\partial x^i}}{\frac{\partial}{\partial x^j}}& = D_{\varepsilon\frac{\partial}{\partial x^i}}{\varepsilon\frac{\partial}{\partial x^j}}\\
		& = \varepsilon^2D_{\frac{\partial}{\partial x^i}}{\frac{\partial}{\partial x^j}}+\varepsilon\frac{\partial \varepsilon}{\partial x^i}\frac{\partial}{\partial x^j}\\
		& = \varepsilon^2\Gamma_{ij}^k\frac{\partial}{\partial x^k}+\varepsilon\frac{\partial \varepsilon}{\partial x^i}\frac{\partial}{\partial x^j}\\
		& = \widetilde{\Gamma}^k_{ij}\frac{\partial}{\partial x^k},
		\end{align*}
		
	where $\widetilde{\Gamma}^k_{ij} = \varepsilon^2 \Gamma_{ij}^k+\varepsilon\frac{\partial \varepsilon}{\partial x^i}\delta_{jk}$. Recall that for standard Christoffel symbol, 
$$\frac{\partial g_{ij}}{\partial x^k}=g_{lj}\Gamma^{l}_{ik}+g_{il}\Gamma^{l}_{jk}.$$
In the stochastic case, a similar formula holds:

$$\frac{\partial \varepsilon^2g_{ij}}{\partial x^k}=g_{lj}\widetilde \Gamma^{l}_{ik}+g_{il}\widetilde\Gamma^{l}_{jk},$$
where the term $\varepsilon^2g_{ij}$ can be interpreted as the coefficient of  $\widetilde {g}(X,Y)=\<\widetilde{X},\widetilde{Y}\>$.

We now derive the geodesic equations. A curve	  $\gamma$ is a geodesic with respect to $\widetilde{D}$ if and only if $0 = \widetilde{D}_{\gamma'}\gamma'|_{\gamma(t)}.$ The 
 geodesic equations with respect to $\widetilde{D}$ are given by the following equation which is formally the same as the standard geodesic equation
		\begin{equation}\label{eqn:geodesic1}
		\varepsilon(t)^2\frac{d^2x^k(t)}{dt^2}+\frac{dx^i(t)}{dt}\frac{dx^j(t)}{dt}\widetilde{\Gamma}_{ij}^k(t)=0,\;,k =1,\cdots,m,
		\end{equation}
which is validated by the following calculations
		\begin{align*}
		0& = \widetilde{D}_{\gamma'}\gamma'|_{\gamma(t)} \\
		& =  \varepsilon^2D_{\gamma'}\gamma'+\varepsilon\gamma'(\varepsilon)\gamma'\\
		& = \varepsilon(t)^2\left[\frac{d^2x^k(t)}{dt^2}+\frac{dx^i(t)}{dt}\frac{dx^j(t)}{dt}\Gamma^k_{ij}(\gamma(t))\right]\frac{\partial}{\partial x^k}\bigg|_{\gamma(t)}+\varepsilon(t) \frac{d\varepsilon(t)}{dt}\frac{dx^k(t)}{dt}\frac{\partial}{\partial x^k}\bigg|_{\gamma(t)} \\
		& = \left[\varepsilon(t)^2\frac{d^2x^k(t)}{dt^2}+\varepsilon(t)^2\frac{dx^i(t)}{dt}\frac{dx^j(t)}{dt}\Gamma^k_{ij}(\gamma(t))+\varepsilon(t) \frac{d\varepsilon(t)}{dt}\frac{dx^k(t)}{dt}\right]\frac{\partial}{\partial x^k}\bigg|_{\gamma(t)} \\
		& = \left[\varepsilon(t)^2\frac{d^2x^k(t)}{dt^2}+\frac{dx^i(t)}{dt}\frac{dx^j(t)}{dt}\left(\widetilde{\Gamma}_{ij}^k-\varepsilon(t)\frac{\partial \varepsilon(t)}{\partial x^i}\delta_{jk}\right)+\varepsilon(t) \frac{d\varepsilon(t)}{dt}\frac{dx^k(t)}{dt}\right]\frac{\partial}{\partial x^k}\bigg|_{\gamma(t)} \\
		& = \left[\varepsilon(t)^2\frac{d^2x^k(t)}{dt^2}+\frac{dx^i(t)}{dt}\frac{dx^j(t)}{dt}\widetilde{\Gamma}_{ij}^k-\varepsilon(t)\frac{dx^i(t)}{dt}\frac{\partial \varepsilon(t)}{\partial x^i}\frac{dx^k(t)}{dt}+\varepsilon(t) \frac{d\varepsilon(t)}{dt}\frac{dx^k(t)}{dt}\right]\frac{\partial}{\partial x^k}\bigg|_{\gamma(t)} \\
		& = \left[\varepsilon(t)^2\frac{d^2x^k(t)}{dt^2}+\frac{dx^i(t)}{dt}\frac{dx^j(t)}{dt}\widetilde{\Gamma}_{ij}^k-\varepsilon(t)\frac{d\varepsilon(t)}{dt}\frac{dx^k(t)}{dt}+\varepsilon(t) \frac{d\varepsilon(t)}{dt}\frac{dx^k(t)}{dt}\right]\frac{\partial}{\partial x^k}\bigg|_{\gamma(t)} \\
		& = \left[\varepsilon(t)^2\frac{d^2x^k(t)}{dt^2}+\frac{dx^i(t)}{dt}\frac{dx^j(t)}{dt}\widetilde{\Gamma}_{ij}^k\right]\frac{\partial}{\partial x^k}\bigg|_{\gamma(t)}
		\end{align*}
		
		We can rewrite equation \eqref{eqn:geodesic1} in terms of $\Gamma$:
		\begin{equation}\label{eqn:geodesic2}
		\varepsilon(t)^2\frac{d^2x^k(t)}{dt^2}+\varepsilon(t)^2\frac{dx^i(t)}{dt}\frac{dx^j(t)}{dt}\Gamma^k_{ij}(\gamma(t))+\varepsilon(t) \frac{d\varepsilon(t)}{dt}\frac{dx^k(t)}{dt}=0,\;,k =1,\cdots,m,
		\end{equation}
		
Typically a geodesic is a deterministic object, the shortest path between two points on a manifold. Equations  \eqref{eqn:geodesic1} and  \eqref{eqn:geodesic2} suggest a stochastic notion of a geodesic. Before we examine random geodesics we first consider the expectation of equation \eqref{eqn:geodesic2} as the deterministic analog resulting from our stochastic connection to the classic geodesic induced by the Levi-Civita connection.

We first define a new deterministic notion of a geodesic based on the expectation of the random field used to construct  $\widetilde{D}$. 
\begin{definition}[Geodesic in expectation]
			A curve $\gamma$ is called a geodesic in expectation with respect to the stochastic connection $\widetilde{D}$ if for all $k =1,\ldots,m.$
\begin{eqnarray*}
\EE\left[ \varepsilon(t)^2\frac{d^2x^k(t)}{dt^2}+\varepsilon(t)^2\frac{dx^i(t)}{dt}\frac{dx^j(t)}{dt}\Gamma^k_{ij}(\gamma(t))+\varepsilon(t) \frac{d\varepsilon(t)}{dt}\frac{dx^k(t)}{dt}\right] &=&0, \\
\alpha(t)\left\{\frac{d^2x^k(t)}{dt^2}+\frac{dx^i(t)}{dt}\frac{dx^j(t)}{dt}\Gamma^k_{ij}(\gamma(t))\right\}+\beta(t)\frac{dx^k(t)}{dt}&=&0,
\end{eqnarray*}
where  $\EE\left[ \varepsilon(t)^2\right] = \alpha(t)$ and $\EE\left[\varepsilon(t) \frac{d\varepsilon(t)}{dt}\right] =\beta(t)$. If  $\alpha$ and $\beta$ are $C^2(\MM)$, there exists a unique solution of the above differential equations given initial conditions.			
	\end{definition}

In the noiseless case where $\varepsilon \equiv 1$,  $\alpha(t)=1$ and $\beta(t)=0$ the standard geodesic equation is recovered. If $\alpha$ and $\beta$ are both $C^2(\MM)$ or smooth, there exists a unique geodesic locally. Ideally we want an almost surely smooth (at least $C^2(\MM)$) random function $\varepsilon:\MM\rightarrow \RR$ so Gaussian processes are excluded as they do not have the required smoothness properties.

For the random vector field defined in Example \ref{eg:epsilon} we can explicitly calculate  $\alpha$ and $\beta$ 
		$$\alpha(t)=\sum_{i=1}^\infty {\psi_i(t)}^2\sigma_i^2 +1,\;\beta(t)=\sum_{i=1}^\infty \psi_i(t)\psi'_i(t)\sigma_i^2.$$
		Since the bases $\{\psi_i\}$ are all smooth, both $\alpha$ and $\beta$ are smooth, so the local existence and uniqueness of a geodesic is guaranteed.

The intuition behind parallel transport of a connection is a way of locally moving the local geometry of one point on a manifold to a nearby point, in short one should consider
parallel transport as the local realization of a connection. Given the stochastic connection $\widetilde D$ we can state two constructions of parallel transport. The first definition
is a deterministic object and is the expected parallel transport. The second is a stochastic construction of parallel transport. Before stating the two definitions we write the 
the stochastic differential equations for the parallel transport that arises from  $\widetilde D$ by setting  $0=\widetilde{D}_{\gamma'}X$
	\begin{equation}
          \label{eqn:sde}
            \begin{aligned}
 X(0) &=X_0 \\
        \varepsilon(t)^2\frac{dX^k(t)}{dt}\frac{\partial}{\partial x^k}+\varepsilon(t)^2\frac{dx^i(t)}{dt}X^j(t)\Gamma^k_{ij}\frac{\partial}{\partial x^k}+\varepsilon(t)\frac{d\varepsilon(t)}{dt}X^k(t)\frac{\partial}{\partial x^k}&=0.
    \end{aligned}
	\end{equation}

\begin{definition}[Parallel transport in expectation]
The parallel transport from a point $X = \gamma(s)$ to $X' = \gamma(t)$ is a map $\widetilde{P}_s^t: T_{\gamma(t)}\MM\rightarrow T_{\gamma(s)}\MM$, where $\widetilde{P}_s^t(v)=X(s)$ and  $X$ is the unique solution of the following linear differential equations. For all  $k=1,\ldots,n$
			$$    \begin{aligned}
			\EE\left[\varepsilon(t)^2\right]\frac{dX^k(t)}{dt}+\EE\left[\varepsilon(t)^2\right]\frac{dx^i(t)}{dt}X^j(t)\Gamma^k_{ij}+\EE\left[\varepsilon(t)\frac{d\varepsilon(t)}{dt}\right]X^k(t)&=0 \\
			X(t)&=v.
			\end{aligned}$$
\end{definition}		
From the definition we can tell that 	$\widetilde{P}_s^t$ is a linear isomorphism between tangent spaces $T_{\gamma(t)} $and $T_{\gamma(s)}$. Moreover, such parallel transport can recover the random covariant derivative in the expectation sense:
\begin{proposition}
	Let $\gamma$ be any smooth curve on $M$, for any smooth vector field $X\in\mathcal{M}$,
	$$\EE\left[\widetilde{\nabla}_{\gamma'(t)}X(t)\right]=\EE[\varepsilon(t)^2]\lim_{\Delta t\to 0}\frac{\widetilde{P}^{t+\Delta t}_t(X(t+\Delta t))-X(t)}{\Delta t}.$$
\end{proposition}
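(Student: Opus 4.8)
The plan is to reduce both sides of the claimed identity to local coordinates along $\gamma$ in the chart $[x^1,\dots,x^m]$ and verify that they agree componentwise. There are two essentially independent computations: the expectation of the stochastic covariant derivative on the left, and the parallel-transport difference quotient on the right; the factor $\EE[\varepsilon(t)^2]$ is precisely what reconciles them.

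First I would compute the left-hand side. Writing $\widetilde\nabla=\widetilde D$ and running the same computation that produced the geodesic equation, but for a general field $X$ rather than $\gamma'$, gives $\widetilde D_{\gamma'}X=\varepsilon^2 D_{\gamma'}X+\varepsilon\,\gamma'(\varepsilon)\,X$. Since the curve $\gamma$ and the vector field $X$ are deterministic and the randomness enters only through $\varepsilon$, linearity of expectation yields
$$\EE[\widetilde D_{\gamma'(t)}X(t)] = \EE[\varepsilon(t)^2]\,D_{\gamma'(t)}X(t) + \EE\!\left[\varepsilon(t)\tfrac{d\varepsilon(t)}{dt}\right]X(t) = \alpha(t)\,D_{\gamma'}X + \beta(t)\,X,$$
where $\alpha(t)=\EE[\varepsilon(t)^2]$ and $\beta(t)=\EE[\varepsilon(t)\tfrac{d\varepsilon(t)}{dt}]$. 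Expanding $D_{\gamma'}X$ in the chart, the $k$-th component of this expression is $\alpha\big(\tfrac{dX^k}{dt}+\tfrac{dx^i}{dt}X^j\Gamma^k_{ij}\big)+\beta X^k$.

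Next I would compute the limit on the right-hand side. By definition $\widetilde P^{t+\Delta t}_t(v)$ is the time-$t$ value of the solution of the defining parallel-transport system with terminal value $v$ at time $t+\Delta t$; dividing that system by $\alpha$ puts it in the form $\tfrac{dY^k}{ds}=-A^k_j(s)\,Y^j$ with $A^k_j(s)=\tfrac{dx^i}{ds}\Gamma^k_{ij}+\tfrac{\beta(s)}{\alpha(s)}\delta^k_j$. Feeding in $Y(t+\Delta t)=X(t+\Delta t)$ and integrating backward to time $t$ gives, to first order, $\widetilde P^{t+\Delta t}_t(X(t+\Delta t))=X(t+\Delta t)+A(t)X(t)\,\Delta t+o(\Delta t)$; combining this with $X(t+\Delta t)=X(t)+\tfrac{dX}{dt}\Delta t+o(\Delta t)$ and dividing by $\Delta t$ produces
$$\lim_{\Delta t\to 0}\frac{\widetilde P^{t+\Delta t}_t(X(t+\Delta t))-X(t)}{\Delta t} = \frac{dX^k}{dt} + \frac{dx^i}{dt}\Gamma^k_{ij}X^j + \frac{\beta}{\alpha}X^k$$
in the $k$-th component, which is just the covariant derivative attached to the normalized transport ODE.

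Finally I would multiply this limit by $\EE[\varepsilon(t)^2]=\alpha(t)$: the factor $\alpha$ clears the denominator in $\tfrac{\beta}{\alpha}$, producing exactly $\alpha\big(\tfrac{dX^k}{dt}+\tfrac{dx^i}{dt}\Gamma^k_{ij}X^j\big)+\beta X^k$, which matches the component expression for the left-hand side from the first step. As this holds for every $k$, the two tangent vectors coincide and the proposition follows. The one step that needs genuine care is the first-order expansion of the transport map: I must check that the backward-integration remainder is truly $o(\Delta t)$ and that the generator $A$ is evaluated at $t$ in the limit, which uses smoothness of the $\Gamma^k_{ij}$ along $\gamma$ and of $\alpha,\beta$ (already assumed in the geodesic-in-expectation discussion). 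Everything else is linear-algebra bookkeeping together with linearity of expectation.
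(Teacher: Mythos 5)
Your proof is correct, but it takes a genuinely different route from the paper's. The paper argues in a frame adapted to the transport: it takes a basis $\{e_i\}$ of $T_{\gamma(0)}\MM$, sets $e_i(t)=\widetilde{P}^{t}_0(e_i)$, so that $\EE\bigl[\widetilde{\nabla}_{\gamma'(t)}e_i(t)\bigr]\equiv 0$ by the very definition of parallel transport in expectation, expands $X(t)=X^i(t)e_i(t)$, and applies the stochastic Leibniz rule (property 3 of Theorem \ref{St:Levi-Civita}) together with linearity of expectation to get
\begin{equation*}
\EE\bigl[\widetilde{\nabla}_{\gamma'(t)}X(t)\bigr]=\EE[\varepsilon(t)^2]\,\frac{dX^i(t)}{dt}\,e_i(t).
\end{equation*}
In that frame the transport map acts trivially on components, $\widetilde{P}^{t+\Delta t}_t\bigl(X(t+\Delta t)\bigr)=X^i(t+\Delta t)e_i(t)$ \emph{exactly}, so the difference quotient equals $\frac{X^i(t+\Delta t)-X^i(t)}{\Delta t}e_i(t)$ and no remainder estimate is needed; the limit is purely algebraic. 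You instead stay in the coordinate chart, identify both sides with the deterministic ``expected connection'' $\alpha\, D_{\gamma'}+\beta\,\mathrm{Id}$ (with $\alpha=\EE[\varepsilon^2]$, $\beta=\EE[\varepsilon\,\tfrac{d\varepsilon}{dt}]$), and obtain the limit from a first-order expansion of the backward transport ODE. Your approach buys explicitness---it exhibits the classical connection whose parallel transport is $\widetilde{P}$, which makes the role of the factor $\EE[\varepsilon^2]$ transparent---at the cost of the $o(\Delta t)$ analysis you rightly flag as the delicate step (standard for linear ODEs with smooth coefficients, so it does go through). Two small points you should make explicit: dividing the transport system by $\alpha(s)$ requires $\alpha(s)\neq 0$, which follows from $\EE[\varepsilon(p)]=1$ and Jensen's inequality, $\alpha=\EE[\varepsilon^2]\geq(\EE[\varepsilon])^2=1$; and the passage $\EE[\varepsilon^2 D_{\gamma'}X+\varepsilon\gamma'(\varepsilon)X]=\alpha D_{\gamma'}X+\beta X$ uses that $\gamma$ and $X$ are deterministic, which you do state.
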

\begin{proof}
	Let $\{e_i\}$ be a basis of $T_{\gamma(0)M}$ and $e_i(t)\coloneqq \widetilde{P}^{t}_0(e_i)$. Then  $\EE\left[\widetilde{\nabla}_{\gamma'(t)}e_i(t)\right]\equiv 0$. Since $\widetilde P$ is an isomorphism, $\{e_i(t)\}$ is a basis of $T_{\gamma(t)M}$, so we can represent $X(t)$ by $X(t) = X^i(t)e_i(t)$ where $X^i(t)$ are smooth functions with respect to $t$. By Theorem \ref{St:Levi-Civita}, 
	$$\widetilde \nabla _{\gamma'(t)}X(t) = \widetilde{\gamma'(t)}(X^i(t))\widetilde e_i(t)+X^i(t)\widetilde \nabla_{\gamma'(t)}e_i(t)=\varepsilon(t)^2\frac{dX^i(t)}{dt}e_i(t)+X^i(t)\widetilde \nabla_{\gamma'(t)}e_i(t).$$
	So $$\EE\left[\widetilde \nabla _{\gamma'(t)}X(t)\right] = \EE\left[\varepsilon(t)^2\right]\frac{dX^i(t)}{dt}e_i(t)+X^i(t)\EE\left[\widetilde \nabla_{\gamma'(t)}e_i(t)\right]=\EE\left[\varepsilon(t)^2\right]\frac{dX^i(t)}{dt}e_i(t).$$
	Recall that $\widetilde P^{t+\Delta t}_t$ is an isomorphism, so
	$$\widetilde P^{t+\Delta t}_t(X(t+\Delta t))=\widetilde P^{t+\Delta t}_t(X^i(t+\Delta t)e_i(t+\Delta t))=  X^i(t+\Delta t)\widetilde P^{t+\Delta t}_te_i(t+\Delta t))=  X^i(t+\Delta t)e_i(t).$$
	As a result,
	$$\lim_{\Delta t\rightarrow 0}\frac{\widetilde{P}^{t+\Delta t}_t(X(t+\Delta t))-X(t)}{\Delta t}= \lim_{\Delta t\rightarrow 0}\frac{(X^i(t+\Delta t)-X^i(t))e_i(t)}{\Delta t}= \frac{dX^i(t)}{dt}e_i(t).$$
	So we conclude that 
	$$\EE\left[\widetilde{\nabla}_{\gamma'(t)}X(t)\right]=\EE[\varepsilon(t)^2]\lim_{\Delta t\to 0}\frac{\widetilde{P}^{t+\Delta t}_t(X(t+\Delta t))-X(t)}{\Delta t}.$$
\end{proof}
	
This result is not surprising: one cannot recover $\widetilde{\nabla}$, a random operator by the deterministic parallel transport $\widetilde P$. Instead we can recover the expectation. 

We now remove the expectation and consider a stochastic version. 
\begin{definition}[Stochastic parallel transport] The stochastic parallel transport is the solution to the follow stochastic differential equations with $k=1,\ldots, n$,
$$\varepsilon(t)^2dX^k_t+\varepsilon(t)^2\frac{dx^i(t)}{dt}\Gamma^k_{ij}(t)X^j_tdt+\varepsilon(t)\frac{d\varepsilon(t)}{dt}X^k_t=0,$$
which can be written in a more familiar notation as
\begin{equation}\label{eqn:stdsde}
dX_t=\mu(X_t,t)dt+\sigma(X_t,t) \,  d\widetilde \varepsilon(t),
\end{equation}
where the drift term is $\mu^k(X_t,t) = -\frac{dx^i(t)}{dt}\Gamma^k_{ij}(t)X^j_t$ and the diffusion term is $\sigma^k(X_t,t) =\sigma^k(X_t)= -X^k_t$ with $\widetilde{\varepsilon}=\log\varepsilon$.
\end{definition}		

Note that the equations stated in \eqref{eqn:stdsde} are not stochastic differential equations in the sense of  It\^{o} but are differential equations with random coefficients. Unlike
the It\^{o} setting $\varepsilon$ is differentiable, at least $C^2$, so the first and second order variations are finite. In other words, the Riemann-Stieltjes integral works in this situation, instead of the It\^{o} integral. Since these are linear differential equations with almost surely $C^2$ coefficients, there exists unique solution locally given initial value $X_0$, which provides a random flow starting from $X_0\in T_{\gamma(t)}M$. 

If we weaken the assumption for the smoothness of the noise $\widetilde\varepsilon$, for example assume $\widetilde\varepsilon$ is Brownian motion, $\widetilde\varepsilon(t)=B_t$, then equation \eqref{eqn:stdsde} becomes a collection of linear stochastic differential equations that can be interpreted as 
$$X_t=\int_0^t \mu(X_s,s)ds+\int_0^t \sigma(X_s,s)dB_s=\int_0^t \mu(X_s,s)ds+\lim_{n\rightarrow \infty} \sum_{[t_{i-1},t_{i}]\in \pi_n} \sigma(X_{t_{i-1}})(B_{t_i}-B_{t_{i-1}}),$$
where $\pi_n$ is a sequence of partitions of $[0,t]$ with the mesh going to zero.  
As a result, there exists a unique solution locally for a given initial condition $X_0$:
$$X_t = \Psi_t X_0,$$
where $\Psi_t$ is the fundamental matrix satisfying $\Psi_0=\mathrm{Id}$ and the homogeneous matrix SDE
$$d\Psi_t = A_t\Psi_tdt+B_td W_t,$$
where $(A_t)_{j,k}=-\frac{dx^i(t)}{dt}\Gamma^k_{ij}(t)$ and $B_t=-\mathrm{Id}$ \citep{platen2010numerical}. The intuition of $\widetilde{\varepsilon}$ being Brownian motion is that $\varepsilon$ is centered at $1$ so its log should be centered at $0$. In this Brownian motion setting, the theory of stochastic calculus applies but geometrically the resulting vector field $X_t$ given by the stochastic parallel transport is no longer a smooth vector field almost surely. This is not surprising as Brownian motion is nowhere smooth. It is the case that 
when $\varepsilon$ is the Brownian motion, which is not differentiable, stochastic parallel transport is still well defined. 

The smoothness condition on $\varepsilon$ can be weakened in this situation for two reasons. First, the stochastic parallel transport involves only first order differentiation  (see \eqref{eqn:stdsde}), so even if the noise is not differentiable one can use It\^{o} calculus. Second, the parallel transport is essentially a covariant derivative with respect to a deterministic curve $\gamma$, a one-dimensional submanifold parametrized by $t$, whose tangent vector field can be randomized by an one-dimensional Brownian motion, greatly simplifying the problem. 

However, the geodesic equation is not well defined as the derivative of the noise $\varepsilon$ is involved. A possible solution is through discretization and numerical approximation, converting equation \eqref{eqn:geodesic1} to a second order difference equations, which depends on both the choice of local coordinate chart and the discretization. Since we are focusing on globally defined geometries including stochastic covariant derivative \ref{rand:def}, stochastic torsion \ref{def:torsion} and stochastic curvature \ref{def:curvature}, certain smoothness ($C^2$) is necessary. As a result, in the remaining sections, we still assume that $\varepsilon$ is $C^2$ so that geometries can be pushed to the stochastic setting.


\subsection{Curvature}
Once we define parallel transport and geodesics the next obvious object of interest is the curvature tensor and the sectional, Ricci, and scalar curvature.

Recall that for a connection $D$ the curvature tensor $R$ for vector fields $X,Y,Z,W \in \XX(\MM)$ is a map  $R: \XX(\MM) \times  \XX(\MM) \rightarrow  \XX(\MM)$ where
 $$R(X,Y)Z = D_X D_Y Z - D_Y D_X Z - D_{[X,Y] }X.$$
 The Riemannian curvature tensor $\mathcal{R}$ is
 $${\mathcal{R}}(X,Y)Z =  D_X D_Y Z -   D_Y D_X Z.$$

\begin{definition}[Stochastic curvature tensor] 
	\label{def:curvature}
	The stochastic curvature tensor $\widetilde{R}$ with respect to $\widetilde D$ for $X,Y,Z,W\in\XX(\MM)$
		is 			
		$$\widetilde{R}(X,Y)Z:=R(\widetilde X,\widetilde Y)\widetilde Z=D_{\widetilde X}D_{\widetilde Y}\widetilde Z-D_{\widetilde Y}D_{\widetilde X}\widetilde Z-D_{{[\widetilde X,\widetilde Y]}}\widetilde Z,$$
		where $D$ is the standard affine connection and $R$ is the curvature tensor induced by $D$.
	\end{definition}
	
The stochastic curvature tensor can also be stated in terms of the classic curvature tensor as stated in the following Lemma.
\begin{lemma}
For vector fields $X,Y,Z \in \XX(\MM)$  the following relation between the stochastic and deterministic curvature tensors hold
$$\widetilde{R}(X,Y)Z=\varepsilon^3R(X,Y)Z.$$
\end{lemma}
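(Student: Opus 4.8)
The plan is to exploit the fact that the Riemann curvature tensor $R$ of the fixed Levi--Civita connection $D$ is a genuine tensor, i.e. it is $C^\infty(\MM)$-linear in each of its three arguments. Granting this, the identity is immediate: since $\widetilde X=\varepsilon X$, $\widetilde Y=\varepsilon Y$, $\widetilde Z=\varepsilon Z$ with $\varepsilon$ an almost surely smooth function, tensoriality lets me pull one factor of $\varepsilon$ out of each slot, giving
$$\widetilde R(X,Y)Z = R(\widetilde X,\widetilde Y)\widetilde Z = R(\varepsilon X,\varepsilon Y)(\varepsilon Z) = \varepsilon^3\, R(X,Y)Z.$$
At the level of principle there is essentially nothing to prove beyond invoking the standard tensoriality of curvature, which holds for any affine connection regardless of torsion.

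Since the paper prefers explicit computations, I would also carry out a direct verification that simultaneously re-derives the tensoriality in this special case. The ingredients are all already available: the $C^\infty(\MM)$-linearity of $D$ in its subscript slot, the Leibniz rule $D_X(fY)=fD_XY+X(f)Y$, and the bracket formula $[\widetilde X,\widetilde Y]=\varepsilon^2[X,Y]+\varepsilon X(\varepsilon)Y-\varepsilon Y(\varepsilon)X$ established earlier in the paper. First I would expand $D_{\widetilde Y}\widetilde Z=\varepsilon^2 D_YZ+\varepsilon Y(\varepsilon)Z$ and then apply $D_{\widetilde X}=\varepsilon D_X$ once more, collecting $D_{\widetilde X}D_{\widetilde Y}\widetilde Z-D_{\widetilde Y}D_{\widetilde X}\widetilde Z$ into a leading term $\varepsilon^3\mathcal{R}(X,Y)Z$ plus correction terms carrying factors $X(\varepsilon)$, $Y(\varepsilon)$ and $XY(\varepsilon)-YX(\varepsilon)=[X,Y](\varepsilon)$. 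Separately I would expand $D_{[\widetilde X,\widetilde Y]}\widetilde Z$ using the bracket formula, obtaining $\varepsilon^3 D_{[X,Y]}Z$ plus \emph{exactly the same} correction terms.

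The main obstacle --- and the only real content of the direct computation --- is verifying that every term involving a derivative of $\varepsilon$ cancels when the three pieces are assembled into $\widetilde R(X,Y)Z = \bigl(D_{\widetilde X}D_{\widetilde Y}\widetilde Z-D_{\widetilde Y}D_{\widetilde X}\widetilde Z\bigr) - D_{[\widetilde X,\widetilde Y]}\widetilde Z$. Concretely, the $\varepsilon^2 X(\varepsilon)D_YZ$, $\varepsilon^2 Y(\varepsilon)D_XZ$, and $\varepsilon^2[X,Y](\varepsilon)Z$ contributions from the second-derivative part must match those produced by the bracket term; this is precisely where the $\varepsilon X(\varepsilon)Y-\varepsilon Y(\varepsilon)X$ piece of $[\widetilde X,\widetilde Y]$ earns its keep. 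Once those cancel, only $\varepsilon^3\bigl(\mathcal{R}(X,Y)Z-D_{[X,Y]}Z\bigr)=\varepsilon^3 R(X,Y)Z$ survives, which is the claim. I expect no genuine difficulty here: the cancellation is forced by tensoriality, so the computation is a bookkeeping check rather than a conceptual hurdle.
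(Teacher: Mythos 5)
Your proposal is correct, and it actually bundles two arguments. Your fallback computation is, step for step, the paper's own proof: the paper expands $D_{\widetilde X}D_{\widetilde Y}\widetilde Z = \varepsilon^3 D_XD_YZ + 2\varepsilon^2 X(\varepsilon)D_YZ + \varepsilon^2 Y(\varepsilon)D_XZ + \varepsilon X(\varepsilon)Y(\varepsilon)Z + \varepsilon^2 X(Y(\varepsilon))Z$, obtains the second term by symmetry, expands the bracket term via $[\widetilde X,\widetilde Y]=\varepsilon^2[X,Y]+\varepsilon X(\varepsilon)Y-\varepsilon Y(\varepsilon)X$, and verifies exactly the cancellation of the $X(\varepsilon)$, $Y(\varepsilon)$, and $[X,Y](\varepsilon)$ correction terms that you identify. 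What you do differently is lead with the conceptual argument: $R$ is a $(1,3)$-tensor for any affine connection, so $C^\infty(\MM)$-linearity in each slot (applied realization by realization, since $\varepsilon$ is almost surely smooth) gives $R(\varepsilon X,\varepsilon Y)(\varepsilon Z)=\varepsilon^3 R(X,Y)Z$ in one line. The paper relegates this observation to a remark \emph{after} its proof (``The above equation can also be interpreted as the tensor property of $R$''), using the computation as the proof and tensoriality as the interpretation, whereas you invert that order. Your route buys brevity and makes clear the result has nothing to do with the specific structure of the randomization beyond $\widetilde X=\varepsilon X$ being a $C^\infty$-rescaling; the paper's route buys an explicit audit of which torsion-type terms cancel, which is thematically useful in a paper whose whole point is tracking such terms. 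Both are sound, and your identification of the correction terms that must cancel matches the paper's ledger exactly.
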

\begin{proof}
Note that  $\widetilde R$ has three terms. We first simplify the first term and note that the  second term follows directly from the first term.
\begin{align*}
D_{\widetilde X}D_{\widetilde Y}\widetilde Z & = D_{\varepsilon X}D_{\varepsilon Y}\varepsilon Z,\\
& = \varepsilon D_{X}\left\{\varepsilon^2D_YZ+\varepsilon Y(\varepsilon)Z\right\},\\
& = \varepsilon\left\{\varepsilon^2D_XD_YZ+2\varepsilon X(\varepsilon)D_YZ+\varepsilon Y(\varepsilon) D_X Z+X(\varepsilon)Y(\varepsilon)Z+\varepsilon X(Y(\varepsilon))Z\right\},\\
& = \varepsilon^3D_XD_YZ+2\varepsilon^2 X(\varepsilon)D_YZ+\varepsilon^2 Y(\varepsilon) D_X Z+\varepsilon X(\varepsilon)Y(\varepsilon)Z+\varepsilon^2 X(Y(\varepsilon))Z.
\end{align*}
We now simplify the third term
\begin{align*}
D_{[\widetilde X,\widetilde Y]}\widetilde Z & = D_{[\varepsilon X,\varepsilon Y]}\varepsilon Z= D_{\varepsilon^2 [X,Y]+\varepsilon X(\varepsilon)Y-\varepsilon Y(\varepsilon)X}\varepsilon Z,\\
& = \varepsilon^3 D_{[X,Y]}Z+\varepsilon^2[X,Y](\varepsilon)Z+\varepsilon^2 X(\varepsilon)D_YZ+\varepsilon X(\varepsilon)Y(\varepsilon)Z-\varepsilon^2Y(\varepsilon)D_XZ-\varepsilon Y(\varepsilon)X(\varepsilon)Z,\\
& = \varepsilon^3 D_{[X,Y]}Z+\varepsilon^2X(Y(\varepsilon))Z-\varepsilon^2Y(X(\varepsilon))Z+\varepsilon^2 X(\varepsilon)D_YZ-\varepsilon^2Y(\varepsilon)D_XZ
\end{align*}
Now we can combine the above equations and cancel most of the terms
\begin{align*}
\widetilde{R}(X,Y)Z & = D_{\widetilde X}D_{\widetilde Y}\widetilde Z-D_{\widetilde Y}D_{\widetilde X}\widetilde Z-D_{{[\widetilde X,\widetilde Y]}}\widetilde Z,\\
& = \varepsilon^3D_XD_YZ+2\varepsilon^2 X(\varepsilon)D_YZ+\varepsilon^2 Y(\varepsilon) D_X Z+\varepsilon X(\varepsilon)Y(\varepsilon)Z+\varepsilon^2 X(Y(\varepsilon))Z\\
&\quad -\left\{\varepsilon^3D_YD_XZ+\varepsilon^2 X(\varepsilon) D_Y Z+2\varepsilon^2 Y(\varepsilon)D_XZ+\varepsilon Y(\varepsilon)X(\varepsilon)Z+\varepsilon^2 Y(X(\varepsilon))Z\right\},\\
&\quad - \left\{\varepsilon^3 D_{[X,Y]}Z+\varepsilon^2 X(\varepsilon)D_YZ-\varepsilon^2Y(\varepsilon)D_XZ+\varepsilon^2X(Y(\varepsilon))Z-\varepsilon^2Y(X(\varepsilon))Z\right\}\\
& = \varepsilon^3R(X,Y)Z.
\end{align*}
\end{proof}
	
The above equation can also be interpreted as the tensor property of $R$. Recall that vector fields are randomized only in the radial direction: $\widetilde{X}=\varepsilon X$, by multiplying a random function, then tensor property implies that $R$ is $C^\infty(M)$-linear.
	
	\begin{definition}[Stochastic Riemannian  curvature tensor]  The stochastic Riemannian curvature tensor for $X,Y,Z,W\in\XX(\MM)$ is
					$$\widetilde{\mathcal{R}}(X,Y,Z,W)\coloneqq \<\widetilde{R}(Z,W)X,\widetilde{Y}\>=\<R(\widetilde Z,\widetilde W)\widetilde X,\widetilde Y\>.$$
		\end{definition}

In the deterministic setting key properties of the (Riemannian) curvature tensor $\mathcal{R}(X,Y,Z,W)$ with  $X,Y,Z,W\in\XX(\MM)$ are
\begin{enumerate}
				\item Skew symmetry: $\mathcal{R}(X,Y,Z,W)=-{\mathcal{R}}(Y,X,Z,W)=-{\mathcal{R}}(X,Y,W,Z)$;
				\item Exchange symmetry: ${\mathcal{R}}(X,Y,Z,W)={\mathcal{R}}(Z,W,X,Y)$;
				\item First Bianchi identity:  $\mathcal{R}(X,Y,Z,W)+{\mathcal{R}}(Z,Y,W,X)+{\mathcal{R}}(W,Y,X,Z)=0$;
				\item Second Bianchi identity: $$D_X \mathcal{R}(Y,Z)+D_Y \mathcal{R}(Z,X)+D_Z \mathcal{R}(X,Y)=X \mathcal{R}(Y,Z)+Y\mathcal{R}(Z,X)+Z \mathcal{R}(X,Y).$$
	\end{enumerate}				
				
The following theorem states the analogous properties for the  stochastic (Riemannian) curvature tensor. We will see all the above standard properties hold
for the stochastic curvature tensor except the second Bianchi identity.
\begin{theorem}
		For any $X,Y,Z,W\in\XX(\MM)$,
			\begin{enumerate}
				\item Skew symmetry: $\widetilde{\mathcal{R}}(X,Y,Z,W)=-\widetilde{\mathcal{R}}(Y,X,Z,W)=-\widetilde{\mathcal{R}}(X,Y,W,Z)$;
				\item Exchange symmetry:  $\widetilde{\mathcal{R}}(X,Y,Z,W)=\widetilde{\mathcal{R}}(Z,W,X,Y)$;
				\item First Bianchi identity:  $\widetilde{\mathcal{R}}(X,Y,Z,W)+\widetilde{\mathcal{R}}(Z,Y,W,X)+\widetilde{\mathcal{R}}(W,Y,X,Z)=0$.
				\item Second Bianchi identity: $$D_{\widetilde{X}}\widetilde{\mathcal{R}}(Y,Z)+D_{\widetilde Y}\widetilde{\mathcal{R}}(Z,X)+D_{\widetilde{Z}}\widetilde{\mathcal{R}}(X,Y) = 3\varepsilon^3\left(X(\varepsilon)\widetilde{\mathcal{R}}(Y,Z)+Y(\varepsilon) \mathcal{\widetilde{R}}(Z,X)+Z(\varepsilon) \widetilde{\mathcal{R}}(X,Y)\right).$$
			\end{enumerate}
\end{theorem}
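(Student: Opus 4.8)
The engine for the whole theorem is the scaling law from the preceding Lemma, $\widetilde{R}(X,Y)Z=\varepsilon^{3}R(X,Y)Z$. The plan is to first upgrade this to a scaling law for the $(0,4)$ tensor and then dispatch the four items separately. Substituting into the definition of $\widetilde{\mathcal{R}}$ and using that $\varepsilon$ is a scalar function, so it pulls out of both the curvature operator and the inner product,
\[
\widetilde{\mathcal{R}}(X,Y,Z,W)=\langle R(\widetilde{Z},\widetilde{W})\widetilde{X},\widetilde{Y}\rangle=\langle \varepsilon^{3}R(Z,W)X,\varepsilon Y\rangle=\varepsilon^{4}\,\mathcal{R}(X,Y,Z,W),
\]
so that $\widetilde{\mathcal{R}}=\varepsilon^{4}\mathcal{R}$ pointwise. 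This single identity drives items (1)--(3).

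For the skew symmetry, the exchange symmetry, and the first Bianchi identity, each assertion is a pointwise $\RR$-linear relation among the values of a $(0,4)$ tensor at a fixed point, and in every one of these relations the random factor $\varepsilon^{4}$ occurs as a common multiplier of all terms. I would therefore simply factor $\varepsilon^{4}$ out and invoke the corresponding classical symmetry of $\mathcal{R}$; for instance $\widetilde{\mathcal{R}}(X,Y,Z,W)=\varepsilon^{4}\mathcal{R}(X,Y,Z,W)=-\varepsilon^{4}\mathcal{R}(Y,X,Z,W)=-\widetilde{\mathcal{R}}(Y,X,Z,W)$, and analogously for the remaining symmetries and for the vanishing cyclic sum in the first Bianchi identity. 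These three items are thus immediate once $\widetilde{\mathcal{R}}=\varepsilon^{4}\mathcal{R}$ is in hand.

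The second Bianchi identity is where the randomization genuinely intervenes, because now a covariant derivative acts on the random factor. Here I would work with the operator form $\widetilde{R}(Y,Z)=\varepsilon^{3}R(Y,Z)$, use tensoriality of $D$ in its direction slot to write $D_{\widetilde{X}}=D_{\varepsilon X}=\varepsilon D_{X}$, and apply the Leibniz rule:
\[
D_{\widetilde{X}}\bigl(\varepsilon^{3}R(Y,Z)\bigr)=\varepsilon D_{X}\bigl(\varepsilon^{3}R(Y,Z)\bigr)=3\varepsilon^{3}X(\varepsilon)\,R(Y,Z)+\varepsilon^{4}\,D_{X}\bigl(R(Y,Z)\bigr).
\]
Summing cyclically over $X,Y,Z$ then splits the left-hand side into two packets: the pieces carrying $\varepsilon^{4}D_{X}(R(Y,Z))$ reassemble into $\varepsilon^{4}$ times the cyclic sum of covariant derivatives of the deterministic curvature, which collapses by the classical second Bianchi identity, while the pieces carrying $3\varepsilon^{3}X(\varepsilon)R(Y,Z)$ survive and produce the stated cyclic correction term.

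The hard part will be the bookkeeping in this last step: one must fix precisely what $D_{\widetilde{X}}\widetilde{\mathcal{R}}(Y,Z)$ means as the covariant derivative of an operator-valued (i.e.\ $(1,3)$) curvature, track how the terms in which $D$ differentiates the vector-field arguments $Y,Z$ are organized, and verify that the deterministic contributions cancel exactly so that only the terms first order in $X(\varepsilon)$ remain (with the correct powers of $\varepsilon$ in the final coefficient). The standing hypothesis that $\varepsilon$ is almost surely $C^{2}$ is what makes $X(\varepsilon)$ and the relevant second derivatives available, and the fact that $\varepsilon$ is scalar-valued is what lets it commute through the inner product and the direction slot of $D$; I would use both freely throughout.
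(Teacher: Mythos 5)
Your proposal cannot be checked against a written-out argument because the paper supplies none: the theorem is stated bare, with the scaling lemma $\widetilde{R}(X,Y)Z=\varepsilon^{3}R(X,Y)Z$ placed before it and the coordinate identities $\widetilde{R}_{ijkl}=\varepsilon^{4}R_{ijkl}$ and $\widetilde{R}^{l}_{kij,h}=\varepsilon^{4}R^{l}_{kij,h}+3\varepsilon^{3}e_{h}(\varepsilon)R^{l}_{kij}$ placed after it as the implicit justification. Your plan reconstructs exactly these ingredients: your $\widetilde{\mathcal{R}}=\varepsilon^{4}\mathcal{R}$ is the paper's $(0,4)$ scaling, and your Leibniz expansion for item (4) is precisely the paper's computation of $\widetilde{R}^{l}_{kij,h}$. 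Your treatment of items (1)--(3) is correct and complete.

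Item (4) is where there is a genuine gap, and it sits exactly in the step you deferred as ``bookkeeping.'' Run your own computation to the end: the cyclic sum of $D_{\widetilde{X}}\bigl(\varepsilon^{3}R(Y,Z)\bigr)=3\varepsilon^{3}X(\varepsilon)R(Y,Z)+\varepsilon^{4}D_{X}\bigl(R(Y,Z)\bigr)$ yields, even granting that the deterministic packet cancels, the quantity $3\varepsilon^{3}\sum_{\mathrm{cyc}}X(\varepsilon)\mathcal{R}(Y,Z)$, which equals $3\sum_{\mathrm{cyc}}X(\varepsilon)\widetilde{\mathcal{R}}(Y,Z)$. The theorem instead asserts $3\varepsilon^{3}\sum_{\mathrm{cyc}}X(\varepsilon)\widetilde{\mathcal{R}}(Y,Z)$; since $\widetilde{\mathcal{R}}(Y,Z)=\varepsilon^{3}\mathcal{R}(Y,Z)$ by the scaling lemma, your result and the stated right-hand side differ by a factor of $\varepsilon^{3}$. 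So your computation does not ``produce the stated cyclic correction term''; it produces the identity with the tilde (equivalently, the $\varepsilon^{3}$ prefactor) removed, and a correct write-up must either locate the missing $\varepsilon^{3}$ (it is not there to be found --- the paper's own coordinate formula has the deterministic $R^{l}_{kij}$ in the correction term) or explicitly flag that the statement's right-hand side is inconsistent with the scaling lemma. A second, related problem: the cancellation of the $\varepsilon^{4}$ packet is not routine. The classical second Bianchi identity kills the cyclic sum of the tensorial derivative $(D_{X}R)(Y,Z)$, but if $D_{X}\bigl(R(Y,Z)\bigr)$ means differentiation of the curvature operator --- the reading forced by the paper's deterministic item (4), whose right-hand side $X\mathcal{R}(Y,Z)+Y\mathcal{R}(Z,X)+Z\mathcal{R}(X,Y)$ is not zero --- then the cyclic sum picks up the terms $R(D_{X}Y,Z)+R(Y,D_{X}Z)$, which reassemble into $\sum_{\mathrm{cyc}}R([X,Y],Z)$ and do not vanish for general vector fields. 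You must commit to one meaning of $D_{\widetilde{X}}\widetilde{\mathcal{R}}(Y,Z)$, prove what the deterministic cyclic sum equals under that meaning, and only then identify the correction term; as written, both the cancellation and the final matching are asserted rather than proved.
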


It is useful to write the curvature tensors with coordinates			
					$$\widetilde{R}\left(\frac{\partial}{\partial x^i},\frac{\partial}{\partial x^j}\right)\frac{\partial}{\partial x^k}=\widetilde{R}^l_{kij}\frac{\partial}{\partial x^l},\;\widetilde{R}\left(\frac{\partial}{\partial x^i},\frac{\partial}{\partial x^j},\frac{\partial}{\partial x^k}, \quad \frac{\partial}{\partial x^l}\right)=\widetilde{R}_{ijkl}.$$
and
			$$\widetilde{R}_{kij}^l=\varepsilon^3 R^l_{kij}, \quad \widetilde{R}_{ijkl}=\varepsilon^4 R_{ijkl}.$$
The covariant derivative of $\widetilde R$ denoting $\{e_i\}$ as the basis of the tangent space is
			\begin{align*}
			\widetilde{R}^{l}_{kij,h}e_l
			& = D_{\widetilde{e_h}}\left(R(\widetilde{e_i},\widetilde{e_j})\widetilde{e_k}\right)=D_{\varepsilon e_h}\left(\varepsilon^3 R(e_i,e_j)e_k\right)\\
			& = \varepsilon^4 \, D_{e_h} R(e_i,e_j)e_k+3\varepsilon^3 e_h(\varepsilon) \, R(e_i,e_j)e_k\\
			& = \varepsilon^4 R^{l}_{kij,h}e_l+3\varepsilon^3 e_h(\varepsilon) R^{l}_{kij}e_l.
			\end{align*}
So the stochastic curvature tensor in coordinates is a composition of elements from the standard curvature tensor 			
$$\widetilde{R}^{l}_{kij,h}=\varepsilon^4 R^{l}_{kij,h}+3\varepsilon^3 e_h(\varepsilon) R^{l}_{kij}.$$

There are thee notions of curvatures in differential geometry that are used to summarize the curvature tensor. The first is the Ricci curvature tensor which measures how much the
volume of a geodesic ball changes as it moves along the manifold and the classic Ricci curvature tensor is 
$$\mbox{Ric}_{ij} = \mathcal{R}^k_{ikj},$$
where $\mathcal R$ is the Riemannian curvature tensor. The sectional curvature $K$ is the curvature of two-dimensional sections of $\MM$ and can also be written in terms of the curvature tensor
$$K(X,Y) = \frac{R(X,Y,X,Y)}{\<X,X\> \<Y,Y\>-\<X,Y\>^2},$$
where $X,Y \in \XX(\MM)$. The simplest notion of curvature is the scalar curvature which is the amount by which the volume of a small geodesic ball in a Riemannian manifold deviates
from a standard ball in Euclidean space. The scalar curvature can be stated in terms of the Ricci curvature
$$S = \mbox{trace}_g \mbox{Ric}.$$

We can define the same curvature summaries for our stochastic curvature tensor. The Ricci curvature and scalar curvature differ but the sectional curvature does not.  This is because the sectional curvature $K(X,Y)$ depends on the 2-dimensional subspace spanned by $X,Y$ only, as the randomization simply rescales $X,Y$ the subspace does not change, hence the
sectional curvature is the same.
			\begin{theorem}\label{thm:seccurv}
				The stochastic sectional curvature $\widetilde K$, Ricci curvature $\widetilde{\emph{Ric}}$ and scalar curvature $\widetilde{S}$ are given by:
				\begin{enumerate}
					\item	$\widetilde{K} = K$;
					\item $\widetilde{\emph{Ric}}=\varepsilon^3 {\emph{Ric}}$;
					\item $\widetilde{S}=\varepsilon^3S$.
					\end{enumerate}		
			\end{theorem}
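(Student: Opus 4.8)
The plan is to reduce everything to the scaling law $\widetilde{R}(X,Y)Z = \varepsilon^3 R(X,Y)Z$ established in the preceding lemma, together with its coordinate incarnations $\widetilde{R}^l_{kij} = \varepsilon^3 R^l_{kij}$ and $\widetilde{R}_{ijkl} = \varepsilon^4 R_{ijkl}$, and the elementary fact that the randomization rescales inner products by $\langle \widetilde{X}, \widetilde{Y}\rangle = \langle \varepsilon X, \varepsilon Y\rangle = \varepsilon^2 \langle X,Y\rangle$. Each of the three curvature summaries is a fixed algebraic combination of the curvature tensor and the metric, so the proof amounts to tracking how the powers of $\varepsilon$ enter and cancel in each case; all identities are understood pointwise and hold almost surely in $\varepsilon$.

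For the sectional curvature (part 1) I would write $\widetilde{K}(X,Y) = \widetilde{\mathcal{R}}(X,Y,X,Y) / (\langle \widetilde{X}, \widetilde{X}\rangle \langle \widetilde{Y}, \widetilde{Y}\rangle - \langle \widetilde{X}, \widetilde{Y}\rangle^2)$, using the randomized fields in both numerator and denominator. The numerator equals $\langle R(\widetilde{X}, \widetilde{Y})\widetilde{X}, \widetilde{Y}\rangle = \varepsilon^4\, \mathcal{R}(X,Y,X,Y)$: three factors of $\varepsilon$ come out by $C^\infty$-tensoriality of $R$ in its three slots and a fourth from the remaining inner-product argument, which is precisely the relation $\widetilde{R}_{ijkl} = \varepsilon^4 R_{ijkl}$. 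The denominator is a sum of products of two inner products, each product picking up $\varepsilon^2\cdot\varepsilon^2 = \varepsilon^4$. The common factor $\varepsilon^4$ cancels and leaves $\widetilde{K} = K$. The conceptual reason, already noted in the text, is that $\widetilde{X} = \varepsilon X$ and $\widetilde{Y} = \varepsilon Y$ span the same plane as $X,Y$, and sectional curvature depends only on that plane.

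For the Ricci curvature (part 2) I would use that $\mathrm{Ric}$ is the single contraction $\mathrm{Ric}_{ij} = R^k_{ikj}$ of the $(1,3)$ tensor, an operation that is purely index-combinatorial and does not involve the metric. Applying the same contraction to $\widetilde{R}^k_{ikj} = \varepsilon^3 R^k_{ikj}$ immediately yields $\widetilde{\mathrm{Ric}}_{ij} = \varepsilon^3 \mathrm{Ric}_{ij}$, that is $\widetilde{\mathrm{Ric}} = \varepsilon^3 \mathrm{Ric}$.

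For the scalar curvature (part 3), $S = \mathrm{trace}_g \mathrm{Ric} = g^{ij}\mathrm{Ric}_{ij}$, and I would define $\widetilde{S}$ as the trace of $\widetilde{\mathrm{Ric}}$ against the fixed reference metric $g$, giving $\widetilde{S} = g^{ij}\widetilde{\mathrm{Ric}}_{ij} = \varepsilon^3 g^{ij}\mathrm{Ric}_{ij} = \varepsilon^3 S$. This is the step I expect to be the main (and essentially only) subtlety: one must commit to tracing with the original metric $g$ rather than with the randomized metric $\widetilde{g}_{ij} = \varepsilon^2 g_{ij}$. Had we used $\widetilde{g}$, whose inverse is $\varepsilon^{-2} g^{ij}$, the contraction would return $\varepsilon S$ instead, so the claimed factor $\varepsilon^3$ records the modeling decision to measure the stochastic Ricci tensor against the background geometry. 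I would state this convention explicitly, after which the computation is immediate. This contrast with the sectional case, where randomized inner products are used and the metric factors cancel, is worth highlighting so the reader sees that the differing powers of $\varepsilon$ across the three summaries are a consequence of which metric each summary is contracted against.
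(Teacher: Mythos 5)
Your proposal is correct and takes essentially the same route as the paper: the paper's proof consists exactly of your sectional-curvature computation (numerator and denominator each scale by $\varepsilon^4$, which cancels), and it dismisses parts 2 and 3 as straightforward, which you fill in correctly by contracting $\widetilde{R}^k_{ikj}=\varepsilon^3 R^k_{ikj}$ and then tracing against $g$. Your remark that the factor $\varepsilon^3$ in part 3 hinges on tracing with the background metric $g$ rather than the randomized metric $\widetilde{g}=\varepsilon^2 g$ (which would instead give $\varepsilon S$) makes explicit a convention the paper leaves implicit, and is worth retaining.
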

\begin{proof}
It suffices to calculate the sectional curvature as the other two statements are straightforward.
\begin{align*}
\widetilde{K}(X,Y)&=\frac{R(\widetilde{X},\widetilde{Y},\widetilde{X},\widetilde{Y})}{\<\widetilde{X},\widetilde{X}\>\<\widetilde{Y},\widetilde{Y}\>-\<\widetilde{X},\widetilde{Y}\>^2}\\
&=\frac{\varepsilon^4R(X,Y,X,Y)}{\varepsilon^4\<X,X\>\<Y,Y\>-\varepsilon^4\<X,Y\>^2}\\
& = \frac{R(X,Y,X,Y)}{\<X,X\>\<Y,Y\>-\<X,Y\>^2}\\
& =K(X,Y).
\end{align*}
\end{proof}			
	
Recall that the curvature form $\Omega^j_i$ is defined by $R(X,Y)=\Omega^j_i(X,Y)w^i\otimes e_j$ where $\{e_j\}$ is an orthonormal basis of tangent space and $\{w^i\}$ is its dual basis. Similarly we can defined the stochastic curvature form $\widetilde{\Omega}^j_i$ by $\widetilde R(X,Y)=\widetilde{\Omega}^j_i(X,Y)w^i\otimes e_j$. By above calculation we know that $\widetilde\Omega^j_i=\varepsilon^2\Omega^j_i$. The Chern-Gauss-Bonnet theorem connects topology and geometry of a $2p$ dimensional orientable Riemannian manifold by the following formula:
\begin{equation}
\label{GaussBonnet}
\chi(M) = \int_M \Omega,
\end{equation}
where $\chi$ is the Euler characteristic and $\Omega = \frac{(-1)^p}{2^{2p}\pi^pp!}\sum_{i_1,\cdots,i_{2p}}\delta_{1\cdots 2p}^{i_1\cdots i_{2p}}\Omega_{i_1i_2}\wedge \cdots \wedge \Omega_{i_{2p-1}i_{2p}}$.

The main goal of defining stochastic sections in  \cite{nicolaescu2016stochastic} was to provide a stochastic version of the Chern-Gauss-Bonnet theorem. We will see that for the stochastic process we consider a  stochastic version of the Chern-Gauss-Bonnet theorem will not hold. In we replace $\Omega$ with  $\widetilde \Omega$ 
$$\widetilde \Omega\coloneqq \frac{(-1)^p}{2^{2p}\pi^pp!}\sum_{i_1,\cdots,i_{2p}}\delta_{1\cdots 2p}^{i_1\cdots i_{2p}}\widetilde \Omega_{i_1i_2}\wedge \cdots \wedge \widetilde \Omega_{i_{2p-1}i_{2p}}=\varepsilon^{2p}\Omega,$$
in the Chern-Gauss-Bonnet theorem, equation \eqref{GaussBonnet}, the equality will no longer hold as the Euler characteristic is deterministic and  $\widetilde \Omega$  is random. One
can address the randomness by taking the expectation and considering the equation
$$\int_M\EE\left[ \widetilde{\Omega}\right]=\int_M \EE\left[\varepsilon^{2p}\right]\Omega\neq \chi(M)$$
so in general a Chern-Gauss-Bonnet theorem does not hold for out construction. The deviation from the Euler characteristic comes from the noise $\varepsilon$, or its high order moment, this result is a significant difference from the work in \cite{nicolaescu2016stochastic} on  stochastic Chern-Gauss-Bonnet thoerems.

\subsection{The Laplace-Beltrami Operator}
One of the most important quantities used to study time varying processes on a Riemannian manifold is the  Laplace-Beltrami operator. Here we define a stochastic Laplace-Beltrami operator. Recall that the definition of the standard Laplace-Beltrami operator is $\Delta = \dive\cdot\nabla$ where $\dive$ is the divergence and $\nabla$ is the gradient.

The obvious definition of the stochastic  Laplace-Beltrami operator is 
$$\widetilde{\Delta} = \widetilde{\dive}\cdot \widetilde{\nabla}$$
where $\widetilde{\dive}$ is the stochastic divergence and $\widetilde{\nabla}$ is the stochastic gradient. The next definition states the stochastic version of the gradient,
divergence, and Laplace-Beltrami operator.

\begin{definition}[Stochastic Laplace-Beltrami operator] For $f\in C^\infty(\MM)$ and  $X\in\XX(\MM)$.
\begin{enumerate}
\item[(1)] The stochastic gradient $\widetilde{\nabla}$ satisfies $\<X,\widetilde{\nabla}f\>= \widetilde{X}(f)$;
\item[(2)] The stochastic divergence  can be stated as the trace of the gradient 
$$\widetilde{\dive} X = \emph{trace}(\widetilde{\nabla});$$
\item[(3)] The stochastic  Laplace-Beltrami operator is $~\widetilde{\Delta} = \widetilde{\dive}\cdot \widetilde{\nabla}.$
\end{enumerate}
\end{definition}



The following lemma states the relation between the standard gradient, divergence, and Laplace-Beltrami operator and their stochastic analogs.
\begin{lemma}\label{lem:laplace}
	For any $f\in C^\infty(\MM)$ and $X\in\XX(\MM)$
	\begin{enumerate}
		\item Gradient: $\widetilde{\nabla}f = \varepsilon \nabla f$;
		\item Divergence: $\widetilde{\dive} X = \dive(\widetilde{X})$;
		\item Laplace-Beltrami: $\widetilde{\Delta} f =\varepsilon^2\Delta f+2\varepsilon\nabla f(\varepsilon)$.
	\end{enumerate}
\end{lemma}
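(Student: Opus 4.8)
The plan is to establish the three identities in order, since (3) follows by feeding (1) into (2). For the gradient, I would use only the defining property $\<X,\widetilde{\nabla}f\>=\widetilde{X}(f)$, valid for every $X\in\XX(\MM)$. Since $\widetilde{X}=\varepsilon X$ acts on functions by $\widetilde{X}(f)=\varepsilon\,X(f)$, and the classical gradient obeys $X(f)=\<X,\nabla f\>$, I obtain $\<X,\widetilde{\nabla}f\>=\<X,\varepsilon\nabla f\>$ for all $X$. Nondegeneracy of $g$ then forces $\widetilde{\nabla}f=\varepsilon\nabla f$, giving (1).

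For the divergence, I would unwind the definition $\widetilde{\dive}X=\operatorname{trace}(\widetilde{\nabla})$, reading it as the ordinary trace of the covariant derivative of the randomized field, $\widetilde{\dive}X=\operatorname{trace}\big(Y\mapsto D_Y\widetilde{X}\big)$. This is, by the classical definition of divergence, exactly $\dive(\widetilde{X})=\dive(\varepsilon X)$, which is (2); I would confirm it by expanding $D_{\partial_i}(\varepsilon X)=\varepsilon D_{\partial_i}X+(\partial_i\varepsilon)X$ in a local frame and summing the diagonal components to recover $\varepsilon\dive X+X(\varepsilon)$. Statement (3) then comes from composing: $\widetilde{\Delta}f=\widetilde{\dive}(\widetilde{\nabla}f)=\widetilde{\dive}(\varepsilon\nabla f)=\dive\big(\varepsilon\cdot\varepsilon\nabla f\big)=\dive(\varepsilon^2\nabla f)$, and the classical product rule $\dive(hY)=h\,\dive Y+Y(h)$ with $h=\varepsilon^2$, $Y=\nabla f$, $\dive\nabla f=\Delta f$, and $(\nabla f)(\varepsilon^2)=2\varepsilon\,\nabla f(\varepsilon)$ yields $\widetilde{\Delta}f=\varepsilon^2\Delta f+2\varepsilon\,\nabla f(\varepsilon)$.

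The product-rule manipulations are routine; the step that needs care is the bookkeeping of the factors of $\varepsilon$. The main subtlety is that the single randomization defining $\widetilde{\dive}$ must act on the field being differentiated and not on the direction of differentiation: differentiating along $\widetilde{Y}=\varepsilon Y$ instead would introduce an extra factor of $\varepsilon$ and give $\widetilde{\dive}X=\varepsilon\,\dive(\widetilde{X})$, which is incompatible with (3). I would therefore make the reading of the divergence definition explicit, and then note that in (3) the gradient output $\widetilde{\nabla}f=\varepsilon\nabla f$ already carries one factor of $\varepsilon$, so the field entering the stochastic divergence is $\varepsilon^2\nabla f$ rather than $\varepsilon\nabla f$; keeping this straight is exactly what makes the three identities mutually consistent.
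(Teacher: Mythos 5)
Your proposal is correct and follows essentially the same route as the paper: identify $\widetilde{\dive}X$ with $\dive(\widetilde{X})=\varepsilon\dive X+X(\varepsilon)$ via the trace of $D\widetilde{X}$, then compose to get $\widetilde{\Delta}f=\dive(\varepsilon^{2}\nabla f)=\varepsilon^{2}\Delta f+2\varepsilon\nabla f(\varepsilon)$. Your nondegeneracy argument for $\widetilde{\nabla}f=\varepsilon\nabla f$ is a slightly more careful justification than the paper's coordinate assertion, and your explicit warning that the randomization must hit the differentiated field (not the direction) correctly pins down the only bookkeeping subtlety.
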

\begin{proof} We first prove the case of the stochastic gradient and divergence.

The stochastic gradient directly follows from its definition which in local coordinates is 
	$$\widetilde{\nabla}f=\varepsilon\nabla f =\varepsilon \partial_i f g^{ij}\frac{\partial}{\partial x^j}.$$

To show the relation for the divergence we write out the following relation in local coodinates
$$D\widetilde X=\left\{\varepsilon\left(\frac{\partial X^j}{\partial x^i}+X^k\Gamma^j_{ki}\right)+\frac{\partial \varepsilon}{\partial x^i}X^j\right\}dx^i\otimes\frac{\partial}{\partial x^j}.$$
By the above computation the stochastic divergence is
$$\widetilde{\dive} X = \varepsilon\left(\frac{\partial X^i}{\partial x^i}+X^k\Gamma^i_{ki}\right)+\frac{\partial \varepsilon}{\partial x^i}X^i=\varepsilon \dive X+X(\varepsilon)=\dive(\widetilde{X}).$$

Combining the results for the gradient and divergence we get
\begin{align*}
\widetilde \Delta f &= \widetilde{\dive}\left(\widetilde{\nabla}(f)\right)=\widetilde{\dive}\left(\varepsilon \nabla f\right)=\dive(\varepsilon^2 \nabla f) \\
& = \varepsilon^2\dive(\nabla f)+\nabla f(\varepsilon^2) = \varepsilon^2\Delta f +2\varepsilon \nabla f(\varepsilon).
\end{align*}
\end{proof}	
		
Lastly in the stochastic setting an analog of the classical divergence theorem holds.
	
	\begin{theorem}\label{thm:div}
		Let $M$ be a compact, orientable Riemannian manifold with boundary and $\bf{n}$ be the inward unit normal vector of $\partial M$, then for any $X\in \XX(M)$,
		$$\int_M \widetilde\dive X \mathrm{d}V_M=-\int_{\partial M}\<{\bf n},\widetilde X\> \mathrm{d}V_M.$$
	\end{theorem}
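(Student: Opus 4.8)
The plan is to reduce the statement to the classical divergence theorem by exploiting the key identity from Lemma \ref{lem:laplace}, namely $\widetilde\dive X = \dive(\widetilde X)$. This identity says that the stochastic divergence of $X$ is nothing but the ordinary divergence of the randomized field $\widetilde X = \varepsilon X$, so the left-hand side of the claim becomes $\int_M \dive(\widetilde X)\, dV_M$, and the problem collapses to an application of the deterministic theorem to the single field $\widetilde X$.

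First I would observe that, since $\varepsilon$ is almost surely $C^2$ (indeed only $C^1$ is needed here) and $X$ is smooth, the product $\widetilde X = \varepsilon X$ is almost surely a $C^1$ vector field on the compact, orientable manifold-with-boundary $M$. Hence for almost every realization of the noise $\varepsilon$ the classical Riemannian divergence theorem applies pathwise to $\widetilde X$. It is worth emphasizing that the resulting identity is therefore an almost-sure (pathwise) statement in $\varepsilon$, not a deterministic one; no expectation is taken, and both sides remain random.

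Next I would invoke the classical divergence theorem in the inward-normal convention. Writing $\nu$ for the outward unit normal on $\partial M$, we have $\mathbf{n} = -\nu$, and the standard theorem reads $\int_M \dive Y\, dV_M = \int_{\partial M}\langle \nu, Y\rangle\, dV_{\partial M} = -\int_{\partial M}\langle \mathbf{n}, Y\rangle\, dV_{\partial M}$ for any $C^1$ field $Y$. Specializing to $Y = \widetilde X$ and then substituting $\dive(\widetilde X) = \widetilde\dive X$ from Lemma \ref{lem:laplace} yields $\int_M \widetilde\dive X\, dV_M = -\int_{\partial M}\langle \mathbf{n}, \widetilde X\rangle\, dV$, which is exactly the asserted equality.

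The argument is almost entirely bookkeeping once Lemma \ref{lem:laplace} is in hand, so there is no substantive analytic obstacle. The two points that require genuine care are, first, fixing the sign that arises from the inward-versus-outward normal convention (an easy source of error), and second, the regularity check: one must confirm that $\varepsilon$ is regular enough that $\widetilde X$ is an admissible field for Stokes' theorem, which is precisely guaranteed by the standing assumption that $\varepsilon$ is almost surely $C^2$. If one wished to weaken the smoothness hypothesis on $\varepsilon$, this regularity step is where the effort would concentrate.
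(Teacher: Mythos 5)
Your proof is correct and follows exactly the paper's argument: apply the identity $\widetilde\dive X = \dive(\widetilde X)$ from Lemma \ref{lem:laplace} and then the classical divergence theorem to the field $\widetilde X$. Your additional remarks on the pathwise (almost-sure) nature of the identity and the regularity of $\varepsilon$ are sound refinements that the paper leaves implicit.
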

\begin{proof}	
The theorem results from the second equation in Lemma \ref{lem:laplace} and the standard divergence theorem
$$\int_M \widetilde\dive X \mathrm{d}V_M=\int_M \dive\widetilde X \mathrm{d}V_M=-\int_{\partial M}\<{\bf n},\widetilde X\> \mathrm{d}V_M.$$
	\end{proof}

Both gradients and the Laplace-Beltrami operator have been used extensively in data science in applications such as dimensionality reduction and learning representations of
data. We are interested in applying the novel Laplace-Beltrami operator we propose to applications where the classic  Laplace-Beltrami operator has seen success.
	
\section{Discussion}

This paper introduces a novel stochastic process on manifolds where the random paths on the manifold are generated via random differentiation and random connections. The motivation for the stochastic process we propose is we are interested in the setting where information along the manifold cannot locally or globally be transported with exact fidelity,
one does not have parallel transport. The desire to model this error in coordinates comes from applications in graphics and geometric morphometrics where one cannot exactly morph
one shape into another, there are always some errors in the coordinate map. Starting with our definition of random connections  we introduce the standard analogs of quantities of interest in  Riemannian geometry: parallel transport, geodesics, curvature, and the Laplcae-Beltrami operator. 

We consider this paper as a first step in developing a stochastic calculus on manifolds where one randomizes the geometry of the manifold itself. Classic Malliavin calculus on manifolds focuses on randomizing paths rather than randomizing the geometry of the manifold. In this paper we take e very differential geometric perspective and avoid embeddings and have a coordinate-free perspective. One can take a more differential equations perspective and model the paths as parabolic stochastic differential equations embedded in an ambient space
and concentrated on a manifold. The connections between the embedded SDE and the construction we propose is of interest. There is growing literature on controlled 
rough paths \citep{lyons} and regularity structures \cite{hairer} where the random paths resemble differentiable functions unlike the random paths in classic It\^o calculus. It is of interest
to relate the smooth random differentiations and connections we have developed to the random paths perspective that has been developed in rough paths. Lastly, in this paper we focused on Riemannian geometries and affine connections, it is of interest to generalize to Finsler geometries and Ehresmann connections.
	
\subsection*{Acknowledgments} 
Sayan Mukherjee would like to thank Ingrid Daubechies, Shan Shan, Tingran Gao, Robert Adler, Jonathan Mattingly, Jiangfeng Lu, Yuliy Baryshnikov, and Juergen Jost for discussions.
Sayan Mukherjee would like to acknowledge funding from NSF DEB-1840223, NIH R01 DK116187-01, HFSP RGP0051/2017, NSF DMS 17-13012, and NSF CCF-1934964.

	\bibliographystyle{apalike}
	\bibliography{ref}

\end{document}